\newtheorem{theorem}{Theorem}[section]
\newtheorem{lemma}[theorem]{Lemma}
\theoremstyle{definition}
\newcommand{\bfs}[1]{{\boldsymbol #1}}
\newcommand{\Tau}{\mathcal{T}}
\newcommand{\R}{\mathbb{R}}
\newcommand{\V}{\mathcal{V}}
\newcommand{\x}{\textbf{x}}
\newcommand{\0}{\bfs{0}}
\renewcommand*{\@seccntformat}[1]{\csname the#1\endcsname\hspace{0.3em}}
\renewcommand\section{\@startsection{section}{1}{0pt}%
                                          {8pt plus 4pt minus 4pt}%
                                          {.01pt}%
                                          {\bf}}
\renewcommand\subsection{\@startsection{subsection}{2}{0pt}%
                                          {-1.3ex plus -0.5ex minus -.9ex}%
                                          {-2pt}%
                                          {\bf} }
\renewcommand\subsubsection{\@startsection{subsubsection}{3}{0pt}%
                                          {-.5ex plus -.5ex minus -.2ex}%
                                          {-10pt}%
                                          {\bf\em}}
\titlespacing{\section}{0pt}{2ex}{1ex}
\titlespacing{\subsection}{0pt}{-1ex}{0ex}
\titlespacing{\subsubsection}{0pt}{0.5ex}{0ex}
\def\d{\text{d}}
\newcolumntype{d}[1]{D{.}{.}{#1}} 
\begin{document}
\begin{center}
{\LARGE On variational iterative methods for semilinear problems
}\\[2em]
{Prosper Torsu\\ \em{Department of Mathematics, California State University, Bakersfield, CA 93311, USA}}
\end{center}
\begin{abstract}
This paper presents an iterative method suitable for inverting semilinear problems which are important kernels in many numerical applications. The primary idea is to employ a parametrization that is able to reduce semilinear problems into linear systems which are solvable using fast Poisson solvers. Theoretical justifications are provided and supported by several experiments. Numerical results show that the method is not only computationally less expensive, but also yields accurate approximations.
\end{abstract}

\noindent \textit{Keywords}: Finite element method, Adomian polynomials, Picard iteration
\section{Introduction}
The object of discussion in this article is an iterative procedure for inverting the semilinear  problem
\begin{equation} \label{eq:bvp}
-\nabla \cdot (a \nabla \psi) + r(\x, \psi) = f(\x),~~~~\x \in \Omega,~~~~~~~\psi(\x) = 0,~~~\x \in \partial \Omega,\\
\end{equation}
where $\Omega$ is a bounded region with boundary $\partial \Omega$. The functions $f$ and $g$ are known continuous data and $r:\Omega \times \R \to \R$ is continuously differentiable. In a variety of context, \eqref{eq:bvp} is of great importance to mathematicians, physicists, biologists, engineers and other practitioners for modeling natural and human-induced phenomena. Models arising in this form describe steady-state characterization of evolution processes and have been the subject of extensive research in recent years. Some of the most commonly encountered classical mathematical models are prototypes of the differential equation in \eqref{eq:bvp}. They include the well-known Helmholtz equation, Bratu, Poisson-Boltzmann, the stationary Klein-Gordon, Allen-Cahn equation and Schr\"{o}dinger equations. Precisely, applications arise in, but not limited to the study of electrostatic properties \cite{baker2001electrostatics} and the study of acoustic and electromagnetic scattering \cite{colton2012inverse, martin2006multiple, voon2004helmholtz}. In the sphere of fluid dynamics, some schemes (e.g. the Implicit Pressure, Explicit Saturation formulation of water-oil displacement \cite{hasle2007geometric}) are governed by variants of \eqref{eq:bvp}.
\par Indeed, simpler variations of \eqref{eq:bvp} are separable in some coordinate systems, which allow solutions to be written in closed form. However, the fundamental models governing most natural phenomena involving motion, diffusion, reaction, etc. occur nonlinearly. Despite their pivotal contribution to the evolution of mathematical theories and experimentation, analytical methods have served minimal roles as only a handful of realistic models have known solutions. For several decades now, scientific computation has  provided critical information about a wide class of problems that would otherwise be unavailable via analytical means. From a computational standpoint, solving \eqref{eq:bvp} requires the simultaneous application of a numerical scheme with some sort of iterative procedure to an associated discrete algebraic system. As it is known, classical central and upwind schemes for the model problem \eqref{eq:bvp} do not always produce satisfactory approximations. This is due to the inherent complex dynamics of $r$ leading to numerical challenges such as the appearance of singular effects \cite{amrein2015fully, roos2008robust}. Even though these numerical tools are very successful, resolving instabilities for stable and accurate approximations often require a lot of computational time and large amounts of memory. 
\par Ideally, analytical solutions to differential equations are desired. While this is possible when some components of the equation are trivialized, realistic models rarely admit closed form solutions. In recent years, advances in differential and integro-differential equations saw the light of novel analytical techniques \cite{adomian1990review, aryana2018series, he2006homotopy, wazwaz1998comparison, yildirim2010he}. The heart of these methods relies on perturbation arguments and the conjecture that solutions to nonlinear equations can be expressed as infinite series. This leads to a set of explicit recursive relations governing the modes of the series. Methods of this sort are robust and efficient for approximating solutions to IVPs, but have been modified to adapt to boundary and initial boundary-value problems. Certainly, the principal theme that has commanded most discussions about the aforementioned techniques is the rapid convergence of the series solutions. Despite this conception, important questions about the convergence of the method are still open because related studies are mainly problem-specific and do not provide a unifying ground to generalize results. 
\par Far from a computational perspective is the issue of existence, solvability, uniqueness, regularity and number of solutions to \eqref{eq:bvp}. Nonlinear differential equations, in general, do not yield easily to rigorous and consolidated mathematical analysis. Successful investigations into the existence and uniqueness of solutions have relied on rigid assumption on $a, r$ and $f$. In the notes \cite{naito1984note}, Naito investigated the existence of entire solutions by enforcing separability of $r$ in the spatial coordinates and the unknown. In particular, the form $r = a(\x)g(w)$ was assumed, where $a$ is locally H\"older on $\R^n$ and $f$ is a locally Lipschitz continuous function on $\R^+$ which is positive and nondecreasing for $w>0$. In the said article, the author established approximate range of the limit values of positive solutions. Naito's results were later improved by Usami \cite{usami1986bounded}, where the general form $r = g(\x,w,\nabla w)$ was considered. A more general case was studied by Boccardo and co. \cite{boccardo2010semilinear}. Their study proved the existence and regularity of \eqref{eq:bvp} with a non-constant conductivity and $r = f(\x)w^{-\gamma}$, where $\gamma >0$. The article concluded that a locally and strictly positive distributional solution to \eqref{eq:bvp} exists if $f \in L^1(\Omega)$. A related discussion that runs across the aforementioned studies (also see \cite{amrein2015fully,badiale2010semilinear, lions1982existence}) is the existence of multiple nontrivial solutions even under very restrictive assumptions. Sometimes, infinitely many solutions exist \cite{kusano1986entire,ni1982elliptic}. As much as extensive study of special classes of nonlinear differential equations exist today, there is no unified theory of nonlinear equations \cite{rosinger2015can}. 
\newpage
\par Inspired by the monographs \cite{aryana2018series, lu2007explicit}, this study presents an iterative procedure for approximating the solution, if it exists, to the variational equation associated with \eqref{eq:bvp}. The method is developed by employing a decomposition that is able to reduce the problem into a collection of sub-equations governed by the Laplacian. For compatible boundary conditions and domains, the method allows Fast Poisson Solvers to be used. The method extends to other boundary types trivially. Prescribing pure Dirichlet or Neumann boundary conditions is a matter of convenience and allows Fourier transforms to be used.
\section{Variational formulation}\label{sec:wf}
Consider a bounded Lipschitz domain $\Omega \subset \R^2$ with polygonal boundary $\partial \Omega$. Let $a:\Omega \to \R^+$ with $0 < a_{\min} \leq a(\x) \leq a_{\max}$ for all $\x\in \Omega$ and define the usual Hilbert space
\begin{equation}
H^1_0(\Omega) = \big\{ w: w\in L^2(\Omega), ~\nabla w\in L^2(\Omega) ~\text{ and }~ w|_{\partial \Omega} = 0\big\}.
\end{equation}
Under the assumption that a solution (not necessarily unique) exists, given $f \in L^2(\Omega)$, the variational formulation associated with \eqref{eq:bvp} reads:
\begin{equation}\label{eq:vf}
\begin{split}
&~\text{Find $u \in H^1_0(\Omega)$ satisfying}\\[1ex]
&\int_\Omega a\nabla u \cdot \nabla w \,\d A + \int_\Omega r(\cdot, u)w \,\d A = \int_\Omega fw \,\d A ,~~~\forall w \in H^1_0(\Omega).
\end{split}
\end{equation}
In the sequel, the explicit dependence of $r$ on $\x$ is ignored and we write $r(\x,\psi) = r(\psi)$ for brevity. An important step towards the construction of the finite element solution consists of approximating the continuous problem in \eqref{eq:vf} by seeking the solution on a finite dimensional subspace of $H^1_0(\Omega)$. Mainly, $H^1_0(\Omega)$ is replaced by the set of piecewise continuous polynomials with compact support. 
\subsection{Galerkin approximation}\label{sec:vf}
Consider a triangulation $\Tau_h$ of $\Omega$ such that  ${\overline \Omega}=\cup_{\tau \in {\Tau}_h} \tau$ with $h=\max_{\tau \in {\Tau}_h} \{h_\tau\}$ and $h_\tau=\sqrt{|\tau|}$. The finite element space associated with ${\Tau}_h$ is defined as
\begin{equation}
\V_h  = \big\{w_h \in C(\overline \Omega): w_h|_\tau \text{ is linear for all } \tau \in \mathcal{T}_h, \text{ and } w_{h}|_{\partial\Omega} = 0 \big\}.
\end{equation}
Then, the associated Galerkin finite element formulation is:
\begin{equation} \label{eq:fem}
\begin{split}
&~\text{Find $u_h \in \mathcal{V}_h$ satisfying}\\[1ex]
&\int_\Omega a\nabla u_h \cdot \nabla w_h \,\d A + \int_\Omega r(\cdot, u_h)w_h \,\d A = \int_\Omega fw_h \d A,~~\forall w_h \in \mathcal{V}_h.
\end{split}
\end{equation}
Let $Z$ be the set of degrees of freedom associated with $\mathcal{V}_h$ having $|Z| = \widetilde N$. Then,
\begin{equation}
\mathcal{V}_h = \text{span} \big\{ \phi_z \big\}_{z \in Z}
\end{equation}
where $\phi_z$ is the finite element basis function tagged with the degree of freedom $z$. By setting $u_h = \sum_{z \in Z} \alpha_z \phi_z$, the variational problem in \eqref{eq:fem} reduces to the following discrete algebraic equation governing $\bfs{\alpha} \in \mathbb{R}^{\widetilde N}$:
\begin{equation}\label{eq:nonlinsys}
\mathcal{R}(\bfs{\alpha}) = \mathcal{A}\bfs{\alpha} + \mathcal{B}(\bfs{\alpha}) + \bfs{\beta} = 0,
\end{equation}
where $\mathcal{A} \in M_{\widetilde N \times \widetilde N}(\R),~\bfs{\beta}, \in \mathbb{R}^{\widetilde N}$ and $\mathcal{B}(\bfs{\alpha})$ is a matrix function given by 
\begin{equation}
\mathcal{A}_{\zeta z} = \int_\Omega a\nabla \phi_z \cdot \nabla \phi_\zeta\, \d A,~~~~\beta_\zeta = \int_\Omega f\phi_{\zeta}\, \d A~~~\text{and}~~~\mathcal{B}_\zeta(\bfs{\alpha}) = \int_\Omega r(\sum \alpha_z \phi_z) \phi_\zeta \,\d A.
\end{equation}
 respectively. The Galerkin solution that approximates $\psi$ is written
\begin{equation}\label{eq:galerkinsol}
u_h = \sum_{z \in Z} \alpha_z \phi_z.
\end{equation}
As $\mathcal{R}$ is nonlinear, direct inversion of \eqref{eq:nonlinsys} requires the use of some iterative scheme. Standard practice involves the application of quadrature rules alongside a linearization procedure. With appropriate regularity conditions, Newton methods guarantee higher convergence and are generally preferred. However, the laborious and unavoidable calculation of the associated Jacobian matrix makes Newton methods unappealing. Beside fixed point iterations, analytical methods have also been utilized to obtain qualitative features of solutions to nonlinear problems.  Discussed in the next section is an iterative method which provides a general machinery for inverting \eqref{eq:vf}.

\section{An iterative approach}\label{eq:im}
We furnish the weak form in \eqref{eq:vf} with a parametrization which transforms the variational problem into a sequential inversion of linear sub-problems. Suppose that $\psi$ and $r$ admit the decompositions
\begin{equation}\label{eq:uapolyseries} 
\psi  =  \sum_{\zeta \leq M} \psi_\zeta~~~~\text{and}~~~~r =  \sum_{\zeta \leq M} \mathcal{P}_\zeta
\end{equation}
where $\psi_\zeta,~0 \leq \zeta \leq M$, are to be determined. Here, construction of the set $\{ \mathcal{P}_\zeta\}_{\zeta \leq M}$ is not well-defined, and thus, gives the reader the freedom to explore several possibilities. The construction adopted in this study engages sets of Adomian polynomials \cite{he2006homotopy, yildirim2010he} defined as
\begin{equation}\label{eq:hepolyn}
\mathcal{P}_\zeta = 
\dfrac{1}{(\zeta-1)!} \dfrac{\textrm{d}^{\zeta - 1}}{\textrm{d} \lambda^{\zeta - 1}} \Big[r\Big(\sum_{j \geq 0} \lambda^j 
\psi_j \Big) \Big]_{\lambda = 0}.
\end{equation}
This way, $\mathcal{P}_\zeta$ depends continuously on $\psi_j,~ 0\leq j \leq \zeta-1$ only. Upon substituting \eqref{eq:uapolyseries} into \eqref{eq:vf}, the variational equation can be written in the form 
\begin{equation}\label{eq:adomdecompo}
 \sum_{\zeta \leq M} \left(\int_\Omega a\nabla \psi_\zeta \cdot \nabla w \,\d A + \int_\Omega \mathcal{P}_\zeta w \,\d A \right) = \int_\Omega fw \, \d A.
\end{equation}
The ultimate goal is to invert a set of linear sub-problems. Thus, the iterative scheme is developed by constructing the sequence $\{ \psi_\zeta\}_{\zeta \leq M}$ where the terms in \eqref{eq:adomdecompo} are associated such that $\psi_\zeta$ solves
\begin{equation}\label{eq:adommatchingun} 
\begin{split}
&~\text{Find $\psi_\zeta \in H^1_0(\Omega)$ satisfying}\\[1ex]
&\int_\Omega a\nabla \psi_\zeta \cdot \nabla w \,\d A = (\delta_{\zeta0} - 1)\int_\Omega \mathcal{P}_\zeta w \,\d A + \delta_{\zeta0} \int_\Omega fw \,\d A,~~~\forall w \in H^1_0(\Omega).
\end{split}
\end{equation}
where $\delta$ is the Kronecker delta. By construction, the term $\mathcal{P}_\zeta$ in the equation for $\psi_\zeta$ is known. Moreover, $\psi_0$ absorbs the boundary conditions of the original problem. All higher order terms serve as corrections to the approximation $\psi = \psi_0$ and honor the same boundary structure but homogeneous conditions. The practical solution is given by the $(M+1)$-term partial sum
\begin{equation}\label{eq:partialsol}
\psi_M = \sum_{\zeta \leq M} \psi_\zeta.
\end{equation}
By requiring that $\mathcal{P}_\zeta \in L^2(\Omega)$ for all $0\leq \zeta \leq M$, Lax-Milgram supplies the existence and uniqueness of each mode, and hence the partial solution.
Because the decomposition is performed on the variational equation, the approach is applicable to other boundary types with equal success. Choosing pure Dirichlet or Neumann condition are means to facilitate the use of Fast Fourier Transforms.

\remark
It is noteworthy that, as it is, this formulation may yield the trivial partial solution. Observe that, for a homogeneous equation, the zeroth mode solves $\int_\Omega a\nabla \psi_\zeta \cdot \nabla w \,\d A = 0$ which yields the $\psi_0 = 0$ upon enforcing homogeneous boundary conditions. Largely, the zeroth mode inherits the most important qualitative features of the solution. As $\psi_0$ deviates significantly, convergence is affected adversely. This can be avoided by introducing an auxiliary function $g \in L^2(\Omega)$ into the differential equation. For instance, one may rewrite the original equation  $-\nabla \cdot (a \nabla \psi) + r(\x, \psi) = 0$  as $-\nabla \cdot (a \nabla \psi) + \widetilde r(\x, \psi) = g(\x)$, where $ \widetilde r(x, \psi) =  r(\x, \psi) + g(\x)$. This leads to a new set of polynomials $\{\widetilde{\mathcal{P}}_\zeta\}_{\zeta \leq M}$, and a nontrivial solution zeroth mode available by solving $\int_\Omega \nabla \psi_0 \cdot \nabla w \,\d A = \int_\Omega g w \,\d A$.

\par Contrary to fixed point methods, this approximation eliminates any form of linearization. The following results provide theoretical justifications which support stability of the modes and also establish convergence of \eqref{eq:partialsol}.
\begin{lemma} \label{lem:stability}
For all $\zeta \leq M$, the partial solution in \eqref{eq:partialsol} is stable provided $\mathcal{P}_\zeta \in L^2(\Omega)$ for all ~$0 \leq \zeta \leq M$. 
\end{lemma}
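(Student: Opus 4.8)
The word \emph{stable} here is meant in the sense of the standard a priori bound: the map sending the data of each sub-problem to the corresponding mode (and hence to the partial sum $\psi_M$) is bounded, so $\psi_M$ depends continuously on the inputs. The plan is to extract this from the coercivity and boundedness of the bilinear form $B(v,w)=\int_\Omega a\nabla v\cdot\nabla w\,\d A$ on $H^1_0(\Omega)$ — precisely the ingredients already used to invoke Lax--Milgram right after \eqref{eq:partialsol}.

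First I would record the two structural inequalities coming from $0<a_{\min}\le a\le a_{\max}$: namely $B(v,v)\ge a_{\min}\|\nabla v\|_{L^2(\Omega)}^2$ and $|B(v,w)|\le a_{\max}\|\nabla v\|_{L^2(\Omega)}\|\nabla w\|_{L^2(\Omega)}$. Combining the coercivity estimate with the Poincar\'e inequality $\|v\|_{L^2(\Omega)}\le C_\Omega\|\nabla v\|_{L^2(\Omega)}$, valid on $H^1_0(\Omega)$ for the bounded Lipschitz domain $\Omega$, upgrades coercivity to the full $H^1_0$ norm.

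Next, fix $\zeta\le M$ and test \eqref{eq:adommatchingun} with $w=\psi_\zeta$. For $\zeta=0$ the right-hand side is $\int_\Omega f\psi_0\,\d A$, which Cauchy--Schwarz and Poincar\'e bound by $C_\Omega\|f\|_{L^2(\Omega)}\|\nabla\psi_0\|_{L^2(\Omega)}$; dividing through gives $\|\nabla\psi_0\|_{L^2(\Omega)}\le (C_\Omega/a_{\min})\|f\|_{L^2(\Omega)}$. For $\zeta\ge 1$ the right-hand side is $-\int_\Omega\mathcal{P}_\zeta\psi_\zeta\,\d A$, and the identical computation — here the hypothesis $\mathcal{P}_\zeta\in L^2(\Omega)$ is used — yields $\|\nabla\psi_\zeta\|_{L^2(\Omega)}\le (C_\Omega/a_{\min})\|\mathcal{P}_\zeta\|_{L^2(\Omega)}$. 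Summing over $\zeta\le M$ and applying the triangle inequality to \eqref{eq:partialsol} produces
\[
\|\psi_M\|_{H^1_0(\Omega)}\ \le\ \frac{C}{a_{\min}}\Big(\|f\|_{L^2(\Omega)}+\sum_{\zeta=1}^{M}\|\mathcal{P}_\zeta\|_{L^2(\Omega)}\Big),
\]
a finite bound depending only on the data; continuous dependence then follows by applying the same estimate to the difference of two solutions, using linearity of each sub-problem.

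The one genuinely delicate point is that $\mathcal{P}_\zeta$ is not free data: by \eqref{eq:hepolyn} it is determined by $\psi_0,\dots,\psi_{\zeta-1}$, so the forcing terms and the modes are intertwined. I expect to resolve this by induction on $\zeta$ — the estimate for $\psi_\zeta$ is stated in terms of $\|\mathcal{P}_\zeta\|_{L^2(\Omega)}$, which is finite by assumption and, through \eqref{eq:hepolyn} together with the continuity of $r$, controlled by the already-estimated lower modes. Making the dependence of $\|\mathcal{P}_\zeta\|_{L^2(\Omega)}$ on $\|\psi_j\|_{H^1_0(\Omega)}$, $j<\zeta$, fully quantitative would require growth conditions on $r$ beyond mere continuous differentiability, so I would keep the conclusion at the level of ``each mode is bounded by its assumed $L^2$ forcing,'' which is exactly what \eqref{eq:partialsol} needs.
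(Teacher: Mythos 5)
Your proposal is correct and follows essentially the same route as the paper: a per-mode a priori (Lax--Milgram/energy) estimate obtained by testing \eqref{eq:adommatchingun} with $\psi_\zeta$ — $\|\nabla\psi_0\|_2 \lesssim \|f\|_2$ and $\|\nabla\psi_\zeta\|_2 \lesssim \|\mathcal{P}_\zeta\|_2$ for $\zeta\ge 1$ — followed by the triangle inequality applied to the partial sum \eqref{eq:partialsol}. Your explicit constants via coercivity and Poincar\'e, and your remark that the dependence of $\mathcal{P}_\zeta$ on lower modes is left at the level of the hypothesis $\mathcal{P}_\zeta\in L^2(\Omega)$, simply make explicit what the paper leaves implicit.
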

\begin{proof}
Stability of each mode is immediate since the weak forms in \eqref{eq:adommatchingun} possess unique solutions in $H^1_0(\Omega)$ with 
$$
\| \nabla {\psi}_\zeta\|_{2} \leq  \delta_{\zeta 0}C_{\zeta}\|f\|_{2} + (1 - \delta_{\zeta 0})\widetilde C_{\zeta}\|\mathcal{P}_{\zeta}\|_{2},~~~\zeta =0,1,2,\cdots,M,
$$
where $\| \cdot \|_2 = \left(\int_\Omega |\cdot|^2 \, \d A \right)^{1/2}$ and the constants $C_{\zeta},\widetilde C_{\zeta},~\zeta = 0,1,2\cdots$ are independent of $\psi_\zeta$. It follows that  
\begin{equation*}
\| \nabla \psi_M\|_{2} \leq  C_{\zeta}\|f\|_{2} + \sum_{\zeta \leq M} \widetilde C_{\zeta}\|\mathcal{P}_{\zeta}\|_{2}.
\end{equation*}
This completes the proof.
\end{proof}

\begin{lemma} \label{lem:lem2}
Let $\psi$ satisfy \eqref{eq:vf} and $\psi_M$ be the partial solution in \eqref{eq:partialsol}. If $\mathcal{P}_\zeta \in L^2(\Omega)$ for all $0 \leq \zeta \leq M$, then $\|\nabla(\psi - \psi_M)\|_{2} \to 0$  as $M \to \infty$.
\end{lemma}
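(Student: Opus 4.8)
The plan is to pass from the recursion \eqref{eq:adommatchingun} to an error equation for $\psi-\psi_M$, convert that into a quantitative estimate by coercivity, and then reduce the whole matter to convergence of the Adomian series of the reaction term in $L^2(\Omega)$.

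First I would add the mode equations \eqref{eq:adommatchingun} over $\zeta=0,1,\dots,M$. Since $\sum_{\zeta\le M}\delta_{\zeta0}=1$ while the remaining terms carry the factor $\delta_{\zeta0}-1$, which vanishes at $\zeta=0$ and equals $-1$ otherwise, the partial solution $\psi_M=\sum_{\zeta\le M}\psi_\zeta$ obeys
\[
\int_\Omega a\nabla\psi_M\cdot\nabla w\,\d A \;=\; \int_\Omega fw\,\d A \;-\; \int_\Omega\Big(\textstyle\sum_{1\le\zeta\le M}\mathcal{P}_\zeta\Big)w\,\d A,\qquad \forall\, w\in H^1_0(\Omega).
\]
Subtracting this from \eqref{eq:vf} with $u=\psi$ cancels the $f$-term and yields the residual identity
\[
\int_\Omega a\nabla(\psi-\psi_M)\cdot\nabla w\,\d A \;=\; -\int_\Omega\Big(r(\psi)-\textstyle\sum_{1\le\zeta\le M}\mathcal{P}_\zeta\Big)w\,\d A,\qquad \forall\, w\in H^1_0(\Omega).
\]

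Next I would test this identity with $w=\psi-\psi_M\in H^1_0(\Omega)$, bound the left side below using the uniform ellipticity $a\ge a_{\min}>0$, bound the right side by Cauchy--Schwarz, and invoke the Poincar\'e inequality $\|v\|_2\le C_\Omega\|\nabla v\|_2$ on $H^1_0(\Omega)$. After dividing by $\|\nabla(\psi-\psi_M)\|_2$ this produces
\[
\|\nabla(\psi-\psi_M)\|_2 \;\le\; \frac{C_\Omega}{a_{\min}}\,\Big\|\,r(\psi)-\textstyle\sum_{1\le\zeta\le M}\mathcal{P}_\zeta\,\Big\|_2 ,
\]
which is nothing but the Lax--Milgram stability bound mentioned before Lemma \ref{lem:stability} applied to the residual. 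Hence it suffices to prove that the truncated Adomian sum converges to $r(\psi)$ in $L^2(\Omega)$ as $M\to\infty$.

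For this last step I would exploit the defining formula \eqref{eq:hepolyn}: setting $\rho(\lambda)=r\big(\sum_{j\ge0}\lambda^j\psi_j\big)$, the sum $\sum_{1\le\zeta\le M}\mathcal{P}_\zeta$ is precisely a partial sum of the Maclaurin expansion of $\rho$ evaluated at $\lambda=1$, whereas $r(\psi)=\rho(1)$ by the assumed decomposition $\psi=\sum_{j\ge0}\psi_j$. The difference is therefore the corresponding Taylor remainder, and the hypothesis $\mathcal{P}_\zeta\in L^2(\Omega)$ (together with the continuous differentiability of $r$ and the smoothness implicitly needed to define the polynomials) keeps this remainder in $L^2(\Omega)$; showing it tends to zero then closes the argument. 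I expect this to be the real obstacle: the stated hypothesis only controls each $\mathcal{P}_\zeta$ separately, so a clean proof actually needs the series to converge absolutely in $L^2$, say $\sum_\zeta\|\mathcal{P}_\zeta\|_2<\infty$, or a dominated-convergence argument applied to the pointwise Taylor remainder of $r$ along $\sum_j\psi_j$; isolating the exact condition on $r$ and on $\{\psi_\zeta\}$ that forces this $L^2$ convergence is the delicate part, whereas the reduction to it via the coercivity estimate above is routine.
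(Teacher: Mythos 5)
Your proposal follows essentially the same route as the paper's proof: sum the mode equations to get the variational identity for $\psi_M$, subtract from \eqref{eq:vf} to obtain the error equation, and then use coercivity, Cauchy--Schwarz and Poincar\'e to bound $\|\nabla(\psi-\psi_M)\|_2$ by the $L^2$ norm of the residual, i.e.\ the tail $\sum_{\zeta>M}\mathcal{P}_\zeta$ of the Adomian series, which is exactly the paper's estimate \eqref{eq:errorbound}. You are in fact slightly more careful than the paper (you retain $a_{\min}$ and the Poincar\'e constant, and you explicitly flag that the final step needs $L^2$ convergence of the series, e.g.\ $\sum_\zeta\|\mathcal{P}_\zeta\|_2<\infty$, a point the paper simply asserts by stating that $\sum_{\zeta>M}\mathcal{P}_\zeta\to0$).
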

\begin{proof}
By assumption, $\psi$ solves \eqref{eq:vf} and $\psi_M$ satisfies
\begin{equation*}
\int_{\Omega} a\nabla \psi_M \cdot \nabla w\, \d A + \sum_{\zeta \leq M} \int_{\Omega} \mathcal{P}_\zeta w \, \d A = \int_\Omega fw \,\d A.
\end{equation*}
It follows that the error $e_M = \psi - \psi_M$ satisfies
\begin{equation*}
\begin{split}
\int_{\Omega} a\nabla e_M \cdot \nabla w\, \d A &= \int_\Omega r(\psi) w \,\d A - \sum_{\zeta \leq M} \int_{\Omega} \mathcal{P}_\zeta w \, \d A = \sum_{\zeta > M} \int_{\Omega} \mathcal{P}_\zeta w \, \d A.
\end{split}
\end{equation*}
From Cauchy-Schwarz, Poincar\'e and triangle inequalities, it holds that 
\begin{equation*}
\begin{split}
\|\nabla e_M\|^2_{2} &\leq \Bigg(\sum_{\zeta > M}  \|\mathcal{P}_\zeta\|_{2} \Bigg) \|e_M\|_{2} \leq \Bigg(\sum_{\zeta > M} \|\mathcal{P}_\zeta\|_{2} \Bigg) \|\nabla e_M\|_{2}. 
\end{split}
\end{equation*}
Consequently, 
\begin{equation}\label{eq:errorbound}
\|\nabla e_M\|_{2} \leq \sum_{\zeta > M}  \|\mathcal{P}_\zeta\|_{2}. 
\end{equation}
The theorem follows as $\sum_{\zeta > M} \mathcal{P}_\zeta \to 0$ for sufficiently large $M$.
\end{proof}
\subsection{Comparison with Picard iteration}
\par The iterative method discussed above shares some similarities with Picard Iteration, where each iteration solves a linear problem. For the form in \eqref{eq:vf}, the fixed point iteration is:
\begin{algorithm}[h!]
\caption{Picard iteration}\label{picard} 
\begin{algorithmic}[1] 
\State Set an initial iterate $\psi^{(0)} \in H^1_0(\Omega)$.
\For{$M = 0,1, \cdots$}
\State Find $\psi^{(M+1)} \in H^1_0(\Omega)$ satisfying
\medskip
\State $\displaystyle \int_\Omega a\nabla \psi^{(M+1)} \cdot \nabla w \,\d A = \int_\Omega fw \,\d A - \int_\Omega r(\psi^{(M)})w \,\d A,~~~\forall w \in H^1_0(\Omega).$
\smallskip
\EndFor
\end{algorithmic}
\end{algorithm}\\
In fact, Lemma \ref{lemm:lemmaadpisame} shows that if $r$ is linear and the set $\{ \mathcal{P}_\zeta\}_{\zeta \geq 0}$ is constructed according to \eqref{eq:hepolyn}, the two schemes are identical.
\begin{lemma}\label{lemm:lemmaadpisame}
Let $\psi_M$ and $\psi^{(M)}$ be the $M^{\text{th}}$ iterate of the procedure above and Picard iteration respectively. Assume that $r$ is linear and $\{\mathcal{P}_\zeta\}_{\zeta \geq 0}$ is constructed according to \eqref{eq:hepolyn}. If $\psi_0 = \psi^{(0)}$, then $\psi_M = \psi^{(M)}$ holds for all $M\geq 1$. 
\end{lemma}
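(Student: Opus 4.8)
The plan is a short induction on $M$, preceded by a simplification of the Adomian polynomials that is where the hypothesis ``$r$ linear'' is actually used. Since $r$ is linear, $r\bigl(\sum_{j\geq 0}\lambda^{j}\psi_j\bigr)=\sum_{j\geq 0}\lambda^{j}r(\psi_j)$, i.e.\ a genuine power series in $\lambda$ whose $(\zeta-1)$-st coefficient is $r(\psi_{\zeta-1})$; differentiating $\zeta-1$ times and evaluating at $\lambda=0$ in \eqref{eq:hepolyn} therefore collapses to
\[
\mathcal{P}_\zeta=r(\psi_{\zeta-1}),\qquad \zeta\geq 1,
\]
while the $\mathcal{P}_0$ term never enters because it is multiplied by $\delta_{00}-1=0$ in \eqref{eq:adommatchingun}. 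In particular $\mathcal{P}_\zeta\in L^2(\Omega)$ as soon as $\psi_{\zeta-1}\in H^1_0(\Omega)$, so every mode is well defined by Lax--Milgram, just as in Lemma~\ref{lem:stability}.

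Next I would add the mode equations \eqref{eq:adommatchingun} over $\zeta=0,1,\dots,M$. By bilinearity of $(u,w)\mapsto\int_\Omega a\nabla u\cdot\nabla w\,\d A$, the left-hand sides combine into $\int_\Omega a\nabla\psi_M\cdot\nabla w\,\d A$ with $\psi_M=\sum_{\zeta\leq M}\psi_\zeta$ the partial sum \eqref{eq:partialsol}; on the right, the $\zeta=0$ term contributes $\int_\Omega fw\,\d A$, and, using linearity of $r$ once more,
\[
\sum_{\zeta=1}^{M}\int_\Omega \mathcal{P}_\zeta w\,\d A=\sum_{\zeta=1}^{M}\int_\Omega r(\psi_{\zeta-1})w\,\d A=\int_\Omega r\!\Bigl(\sum_{\zeta\leq M-1}\psi_\zeta\Bigr)w\,\d A=\int_\Omega r(\psi_{M-1})w\,\d A,
\]
where $\psi_{M-1}$ now denotes the previous partial sum. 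Hence the partial sums produced by the decomposition scheme obey
\[
\int_\Omega a\nabla\psi_M\cdot\nabla w\,\d A=\int_\Omega fw\,\d A-\int_\Omega r(\psi_{M-1})w\,\d A,\qquad\forall w\in H^1_0(\Omega),
\]
which is exactly the update that defines $\psi^{(M)}$ from $\psi^{(M-1)}$ in Algorithm~\ref{picard}.

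Finally I would close the induction. For $M=1$, the assumption $\psi_0=\psi^{(0)}$ makes the right-hand side of the $\psi_1$-problem coincide with that of the $\psi^{(1)}$-problem; since $\int_\Omega a\nabla\cdot\,\nabla\cdot\,\d A$ is coercive ($a\geq a_{\min}>0$) and the data lie in the dual of $H^1_0(\Omega)$, Lax--Milgram gives a unique solution, forcing $\psi_1=\psi^{(1)}$. Assuming $\psi_k=\psi^{(k)}$ for some $k\geq1$, the displayed recursion shows that $\psi_{k+1}$ and $\psi^{(k+1)}$ solve one and the same well-posed linear variational problem, whence $\psi_{k+1}=\psi^{(k+1)}$, completing the induction.

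I do not expect a genuine obstacle: the whole content sits in the two places where linearity of $r$ is invoked --- first to reduce $\mathcal{P}_\zeta$ to $r(\psi_{\zeta-1})$ (which also requires handling the index shift in \eqref{eq:hepolyn} correctly, so that $\mathcal{P}_1=r(\psi_0)$), and then to recognize that the summed right-hand side equals $r$ evaluated at the \emph{previous partial sum} rather than at some other combination of modes. Once these two identities are in hand, uniqueness from Lax--Milgram does the rest.
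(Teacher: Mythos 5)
Your proposal is correct and follows essentially the same route as the paper: use linearity of $r$ to collapse the Adomian polynomials, sum the mode equations so the partial sums satisfy the Picard update, and conclude by induction together with uniqueness of the coercive linear problem. In fact your explicit handling of the index shift, $\mathcal{P}_\zeta = r(\psi_{\zeta-1})$ for $\zeta\geq 1$ so that $\sum_{\zeta\leq M}\mathcal{P}_\zeta$ reproduces $r$ evaluated at the previous partial sum, is slightly more careful than the paper's shorthand $\mathcal{P}_\zeta = r(\psi_\zeta)=\psi_\zeta$, but the argument is the same.
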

\begin{proof}
The argument is inductive. Since $r$ is linear, we must have $\mathcal{P}_\zeta = r(\psi_\zeta) = \psi_\zeta$. Thus,
\begin{equation}\label{eq:vfmeth}
\begin{split}
\int_{\Omega} a\nabla \psi_M \cdot \nabla w\, \d A & = \int_\Omega fw \,\d A - \sum_{\zeta \leq M} \int_{\Omega} \mathcal{P}_\zeta w \, \d A\\
&=  \int_\Omega fw \,\d A - \int_\Omega \psi_{M-1} w\,\d A.
\end{split}
\end{equation}
From algorithm \ref{picard}, $\psi^{(M)}$ also satisfies
\begin{equation}\label{eq:vfpicard}
\int_\Omega a\nabla \psi^{(M)} \cdot \nabla w \,\d A = \int_\Omega fw\, \d A - \int_\Omega \psi^{(M-1)} w \,\d A.\\
\end{equation}
Suppose for the sake of induction that $\psi^{(M-1)} = \psi_{M-1}$ for $M \geq 2$. It follows from \eqref{eq:vfpicard} and \eqref{eq:vfmeth} that
\begin{equation*}
\begin{split}
\int_{\Omega} a\nabla(\psi_M - \psi^{(M)}) \cdot \nabla w\, \d A & = \int_\Omega \left(\psi^{(M-1)} - \psi_{M-1}\right) w\,\d A.
\end{split}
\end{equation*}
The last equation is zero by assumption. As this holds for arbitrary $w\in H^1_0(\Omega)$, it must be that $\psi^{(M)} - \psi_M$ is constant in $H^1_0(\Omega)$. We then deduce that $\psi^{(M)} - \psi_M = 0$.
\end{proof}
\par Equation \eqref{eq:errorbound} shows that convergence of the partial solution is controlled by the rate of decay of  $\mathcal{P}_\zeta$. Certainly, this procedure is attractive because, similar to Picard iteration, calculation of the Jacobian associated with \eqref{eq:nonlinsys} is replaced by the repeated inversion of the operator $\nabla \cdot(a \nabla \cdot )$. Since the computational cost of inverting a single instance of this operator on a highly resolved mesh can be enormous for traditional numerical methods, calculation of \eqref{eq:partialsol} is highly prohibitive for large $M$. More importantly, engagement of Fast Poisson Solvers is limited because symmetry of the resulting stiffness matrix coefficients is affected. 
\par In what follows, we reduce \eqref{eq:adommatchingun} into variational sub-problems governed by $\widetilde a(\x) = 1$ for all $\x$. This allows us to employ discrete sine transforms to accelerate calculation of the modes. To accomplish this, consider the conductivity log characterization \cite{gelhar1993stochastic}
\begin{equation}\label{eq:cond}
\mathcal{Y} = \log a,
\end{equation}
which exists, and is defined since $a(\x) >0$ for all $\x \in \Omega$.  This suggests that for sufficiently large $N$, $a$ can be approximated using the truncated Taylor series 
\begin{equation}\label{eq:consum}
a = \sum_{\xi \leq N}\mathcal{Y}^\xi/\xi!.
\end{equation}
Using this representation for a fixed $\zeta$, the variational equation in \eqref{eq:adommatchingun} becomes
\begin{equation}\label{eq:adommatchingunsum} 
\sum_{\xi \leq N}\frac{1}{\xi!}\int_\Omega \mathcal{Y}^\xi\nabla \psi_{\zeta \xi} \cdot \nabla w \,\d A = (\delta_{\zeta0} - 1)\int_\Omega \mathcal{P}_\zeta w \,\d A + \delta_{\zeta0} \int_\Omega fw \,\d A.
\end{equation}
By collecting terms of the same order (index of $\psi_\zeta$ and the exponent of $\mathcal{Y}$ adds up to the same $n \in \mathbb{N}$), \eqref{eq:adommatchingunsum}  transforms into the following recursive equations:
\begin{equation}\label{eq:adommatchingunsum} 
\begin{split}
&\int_\Omega \nabla \psi_{\zeta 0} \cdot \nabla w \,\d A = (\delta_{\zeta0} - 1)\int_\Omega \mathcal{P}_\zeta w \,\d A + \delta_{\zeta0} \int_\Omega fw \,\d A, \\[2ex]
&\int_\Omega \nabla \psi_{\zeta \xi} \cdot \nabla w \,\d A  + \sum_{j=1}^\xi \frac{1}{j!} \int_\Omega \mathcal{Y}^j\nabla \psi_{\zeta ,\xi-j} \cdot \nabla w \,\d A = 0,~~\xi \geq 1.
\end{split}
\end{equation}
 The approximate solution is written as 
\begin{equation}\label{eq:doublepartialsol}
\psi_{MN} = \sum_{\zeta \leq M}\sum_{\xi \leq N} \psi_{\zeta \xi}.
\end{equation}
Clearly, any unfavorable effect of $a$ on the invertibility of the resulting stiffness matrix is avoided. A minimal penalty is realized in the calculation of the load vector.
\subsection{Discrete algorithm}
The procedure discussed in the preceding sections can be summarized as follows:

\begin{algorithm}
\caption{Series approximation}\label{series} 
\begin{algorithmic}[1] 
\State Fix $M,N$.
\For{$\zeta=0$ to $N$}
\For{$\xi=0$ to $M$}
\State Find $\displaystyle \psi_{\zeta0} \in \V_h:\int_\Omega \nabla \psi_{\zeta 0} \cdot \nabla w \,\d A = \delta_{\zeta0} \int_\Omega fw \,\d A - \delta_{\xi0} \int_\Omega \mathcal{P}_{\zeta}w \,\d A$
\medskip
\State Find $\displaystyle \psi_{\zeta \xi} \in \V_h:\int_\Omega \nabla \psi_{\zeta \xi} \cdot \nabla w \,\d A + \sum_{j \leq\xi} \frac{1}{j!} \int_\Omega \mathcal{Y}^j\nabla \psi_{\zeta ,\xi-j} \cdot \nabla w \,\d A = 0,~~\zeta,\xi \geq 1$
\EndFor
\EndFor
\State Set $\psi_{MN} = \sum_{\zeta \leq M}\sum_{\xi \leq N} \psi_{\zeta \xi}$.
\end{algorithmic}
\end{algorithm}
For all $\zeta$, we can verify that the variational problems in Algorithm \ref{series} have unique solutions in $H^1_0(\Omega)$ satisfying the following stability estimate: 
\begin{equation}
\|\nabla \psi_{\zeta 0}\|_{2} \leq C_{\zeta0} \left(\delta_{\zeta0} \|f\|_{2} + \|\mathcal{P}_\zeta\|_{2}\right)
\end{equation}
provided $\mathcal{P}_\zeta \in L^2(\Omega)$. By construction, $\int_\Omega \nabla \psi_{\zeta1} \cdot \nabla w \,\d A + \int_\Omega \mathcal{Y} \nabla \psi_{\zeta0} \cdot \nabla w \,\d A = 0$. Thus, as long as $\mathcal{Y} \in L^2(\Omega)$, it follows that  
\begin{equation*}
\begin{split}
\|\nabla\psi_{\zeta 1}\|_{2} &\leq \|\mathcal{Y}\|_{2} \|\nabla \psi_{\zeta 0}\|_{2}\\
& \leq C_{\zeta1} \|\mathcal{Y}\|_{2} \left( \delta_{\zeta0} \|f\|_{2} + \|\mathcal{P}_\zeta\|_{2}\right).
\end{split}
\end{equation*}
By complete induction, we have
\begin{equation}
\|\nabla \psi_{\zeta \xi}\|_{2} \leq C_{\zeta \xi} \|\mathcal{Y}\|^{\xi}_{2}\left( \delta_{\zeta0} \|f\|_{2} + \|\mathcal{P}_\zeta\|_{2}\right).\\
\end{equation}
Upon a successful Galerkin discretization, the discrete equations have the form $\mathcal{M} \bfs{z} = \mathcal{F}$ where $\mathcal{M}$ is the discrete Laplacian. The collection can now be solved efficiently and fast using the fast Poisson solver discussed in the next section. As much as the summation in the equation for $\psi_{\zeta \xi}$ tells a strong dependence of higher-order terms on lower-order ones, Fast Poisson solvers comes in handy especially when a lot of terms are required to represent each mode accurately.

\par For Neumann problems, the Galerkin discretization is known to produce a linear system characterized by a one-dimensional kernel. The associated stiffness matrix is singular with fatal effects on iterative solvers, e.g., conjugate gradient. Often, the kernel is eliminated by specifying data at a particular node, or approximate the system using a minimization-based iterative solver \cite{bochev2005finite}. While these techniques seemingly avert direct inversion a singular matrix, Bochev and co., in the referenced article, remarked that such choices are subtle and unsatisfactory because specifying data at a specific node is ambivalent and roundoff errors may hinder convergence of the iterative solver. In this context, Algorithm \ref{series} provides the environment to exploit direct solvers. 
\par 
In very limited cases, \eqref{eq:bvp} is fast solvable on simple geometries. The most common numerical treatment of the finite element discretization involves a uniform mesh refinement across the entire computational domain. This poses numerical challenges as the finite element method (as well as finite difference and finite volume) performs poorly with vanishing element width. For the prescribed boundary conditions, this is no longer a limitation because we are able to use Fourier transforms to invert the sub-systems efficiently.
\section{A Fast Poisson Solver (FPS)}\label{sec:fps}
Fast Poisson Solvers are simple but powerful machinery which harness Fourier transforms to produce efficient solvers for the Poisson equation with compatible boundary conditions. They require a low amount of arithmetic operations and can be executed effectively with optimal operation count of $\mathcal{O} (n^2 \log n)$. The procedure that utilizes finite difference is discussed in \cite{iserles2009first}. Let $\mathcal{M} \bfs{z} = \mathcal{F}$ be the algebraic system resulting from the Galerkin approximation of
$$
\int_\Omega \nabla z \cdot \nabla w \,\d A = \int_\Omega gw \,\d A
$$
using the bilinear basis. Then $\mathcal{M}$ consists of block diagonal matrices identified as TST (Toeplitz, Symmetric and Tridiagonal). Denote by TD$(n; \alpha^\star, \beta^\star)$ the $n \times n$ tridiagonal matrix with $\alpha^\star$ along the principal diagonal and $\beta^\star$ along the sub- and super-diagonals. For the finite element discretization, let $\Delta x$ and $\Delta y$ be the element width with $n_1$ and $n_2$ elements in the vertical and lateral directions respectively. Then $\mathcal{M} =$ TD$(n; \mathcal{S}, \mathcal{T})$, where $\mathcal{S}$ and $\mathcal{T}$ are themselves TST with sub- and super-diagonals
\begin{equation}
\begin{split}
\mathcal{S}_{ii} &= \frac{4}{3\Delta x \Delta y}\,(\Delta x^2 + \Delta y^2), \hspace{0.23in} \mathcal{S}_{i-1,i} =  \mathcal{S}_{i,i+1} = \frac{1}{3\Delta x \Delta y}(\Delta x^2 - 2\Delta y^2),  \\[2ex]
\mathcal{T}_{ii} &= \frac{1}{3\Delta x \Delta y} (\Delta x^2 - 2\Delta y^2), \hspace{0.2in} \mathcal{T}_{i-1,i}  = \mathcal{T}_{i,i+1} = -\frac{1}{6\Delta x \Delta y}(\Delta x^2 + \Delta y^2). 
\end{split}
\end{equation}
Interestingly, TST matrices encode critical information which are fundamental to the implementation of FFT-based Poisson solvers. In fact, the linear system $\mathcal{M} \bfs{z} = \mathcal{F}$ is solvable completely by using the eigenvalues and eigenfunctions of $\mathcal{S}$ and $\mathcal{T}$ only. First, the eigenvectors are given explicitly by the formula 
\begin{equation} \label{eq:eigfun}
\varphi_{jl} = \sqrt{\frac{2}{n_{1}+1}}\sin \Big(\frac{\pi jl}{n_{1}+1} \Big), ~~~j,l=1,2,\cdots, n_{1}. 
\end{equation}
Moreover, the collection $\{\varphi_{jl} \}_{jl = 1}^{n_1}$ is orthonormal with corresponding eigenvalues
\begin{equation} \label{eq:eigvals}
\begin{split}
\lambda_j^\mathcal{S} &= \frac{4}{3}\left(\frac{\Delta x}{\Delta y} + \frac{\Delta y}{\Delta x}\right) + \frac{2}{3}\left(\frac{\Delta x}{\Delta y} - \frac{2\Delta y}{\Delta x}\right) \cos \Big(\frac{\pi j}{n_{1}+1} \Big),\\[3ex]
\lambda_j^\mathcal{T} &= \frac{1}{3}\left(\frac{\Delta y}{\Delta x} - \frac{2\Delta x}{\Delta y}\right) - \frac{1}{3}\left(\frac{\Delta x}{\Delta y} + \frac{\Delta y}{\Delta x}\right)\cos \Big(\frac{\pi j}{n_{1}+1} \Big).
 \end{split}
\end{equation}
With the set $\{\varphi_{jl},\lambda_j^\mathcal{S} , \lambda_j^\mathcal{T}\}_{jl = 1}^{n_1}$ available, construction of the fast Poisson solver is straightforward. Let $\boldsymbol{z} = (\boldsymbol{z}_1, \boldsymbol{z}_2, \cdots, \boldsymbol{z}_{n_{2}})^T$ and 
$\mathcal{F} = (\mathcal{Q}_1, \mathcal{Q}_2, \cdots, \mathcal{Q}_{n_{2}})^T$ where $\boldsymbol{z}_l$ and $\mathcal{Q}_l$ are block vectors corresponding to the $l^{\text{th}}$ column of $\mathcal{M}$ and $\mathcal{F}$ respectively. Then $\mathcal{M} \bfs{z} = \mathcal{F}$ is expressible as
\begin{equation}\label{eq:tridiag}
\mathcal{T}\boldsymbol{z}_{l-1} + \mathcal{S}\boldsymbol{z}_l + \mathcal{T}\boldsymbol{z}_{l+1} = \mathcal{Q}_l, ~~l=1,2,\cdots, n_{2}.
\end{equation}
Denote by $\Phi$ the matrix whose $(jl)^{\text{th}}$ entry is given by \eqref{eq:eigfun}. Then 
$$
\mathcal{S} = \Phi D_\mathcal{S} \Phi^T~~~\text{and}~~~\mathcal{T} = \Phi D_\mathcal{T} \Phi^T,
$$
where $D_\mathcal{I} = \text{diag}[\lambda_l^{\mathcal{I}},\lambda_l^{\mathcal{I}},\cdots,\lambda_l^{\mathcal{I}}]$. Orthogonality of $\Phi$ implies that \eqref{eq:tridiag} can be reduced to
\begin{equation}\label{eq:indetri}
D_\mathcal{T} \boldsymbol{\zeta}_{l-1} + D_\mathcal{S} \boldsymbol{\zeta}_l + D_\mathcal{T} \boldsymbol{\zeta}_{l+1} = \boldsymbol{\xi}_l, ~~~l=1,2,\cdots, n_{2},
\end{equation}
where, $ \Phi\boldsymbol{z}_l = \boldsymbol{\zeta}_l$ and  $ \Phi\mathcal{Q}_l = \boldsymbol{\xi}_l$. These operations successfully decoupled $\mathcal{M} \bfs{z} = \mathcal{F}$ into a set of independent one-dimensional tridiagonal systems which can be solved using fast tridiagonal solvers.

\section{Numerical Tests}
We present three numerical tests to explore the effect of $M$ and $N$ on the accuracy of the series approximations. Even though the problems are manufactured and may not bear any physical meaning, they do provide good insights into the performance of the method. Because of the close similarity, we also compute reference approximations by direct simulation of $\mathcal{R}(\bfs{\alpha}) = \0$ using Picard iteration. The stopping criterion is $\|\mathcal{R}(\bfs{\alpha})\|_\infty \leq 10^{-10}$.

\newpage
\subsection{Test 1}: In this foremost experiment, we set $a(x_1,x_2) = 1,~r(\psi) = \psi - \psi^3$ and $f$ is chosen appropriately so that \eqref{eq:bvp} admits  the solution $\psi(x_1,x_2) = \omega(x_1)\omega(x_2)$, where 
\begin{equation*}
\omega(s) = 1 + \exp(-1/0.05) - \exp(-s/0.05) - \exp((s - 1)/0.05). 
\end{equation*}
Since $a$ is constant, the index $\xi$ is redundant. Thus, we simply denote $\psi_{\zeta \xi}$ by $\psi_{\zeta}$. Figure \ref{fig:fig1} compares the numerical approximations of $\psi_{1}, \psi_{4}$ and $\psi_{8}$ with the analytical solution.
\begin{figure}[h!]
\centering
  \begin{subfigure}{5.5cm}
   \hspace*{0.02in}\makebox[0pt][l]{\includegraphics[height=1.2in, width=1.8in]{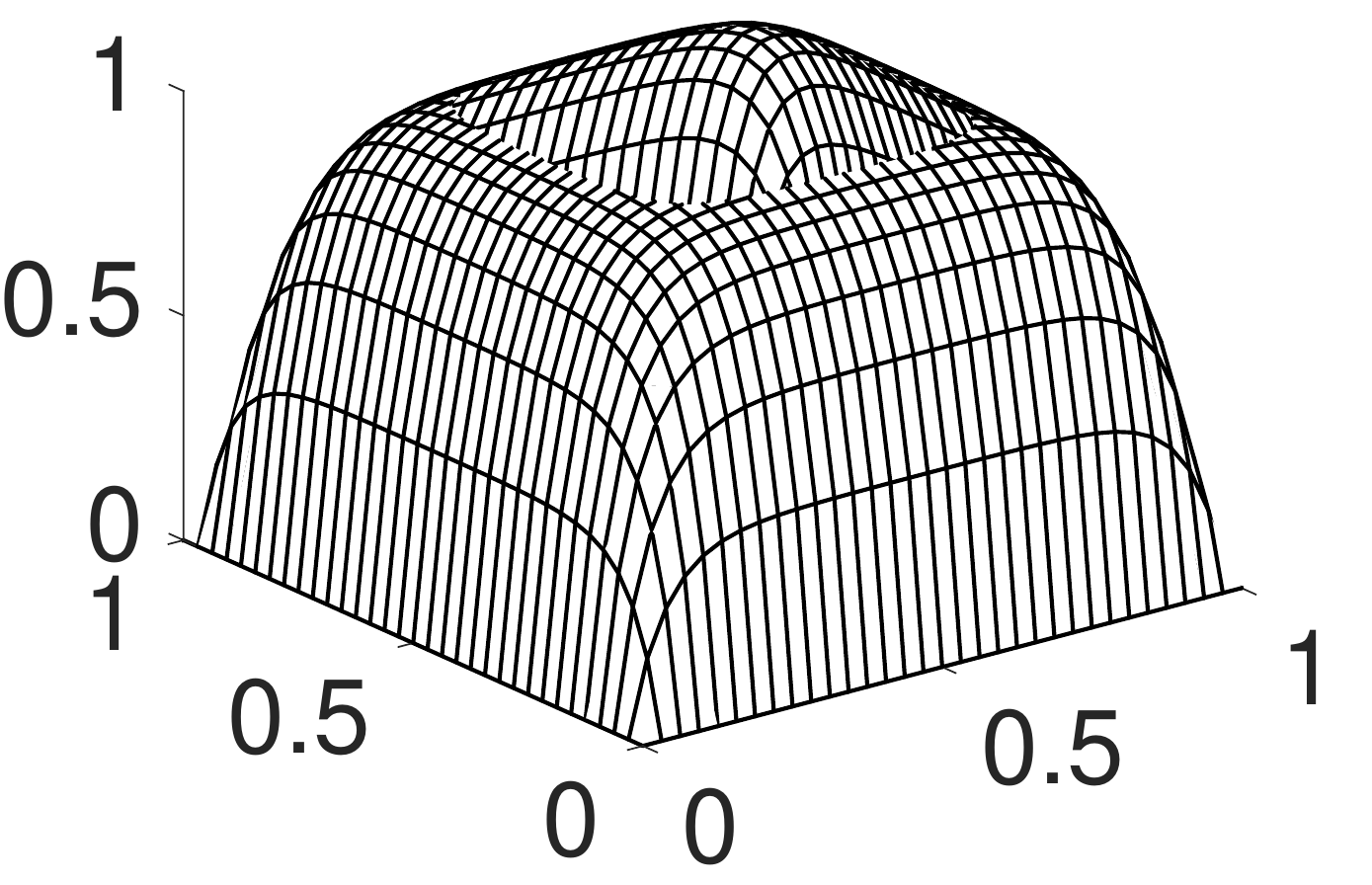}}
    \caption{$\psi_1$ approximation}
  \end{subfigure}
  \begin{subfigure}{5.2cm}
    \hspace*{0.06in}\makebox[0pt][l]{\includegraphics[height=1.2in, width=1.8in]{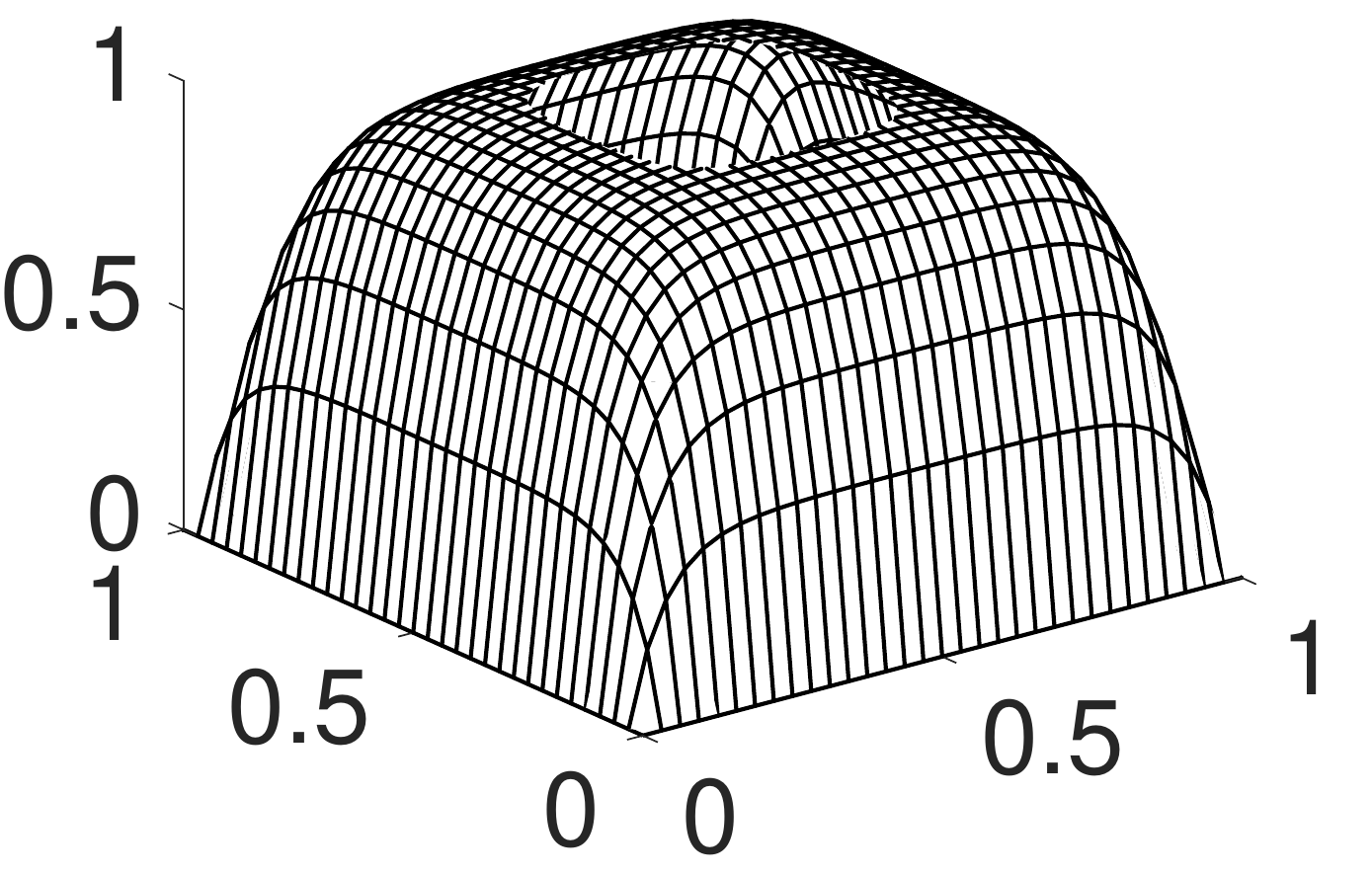}}
    \caption{$\psi_4$ approximation}
  \end{subfigure}\\[2ex]
\vspace*{0.1in}
  \begin{subfigure}{5.6cm}
    \makebox[0pt][l]{\includegraphics[height=1.2in, width=1.8in]{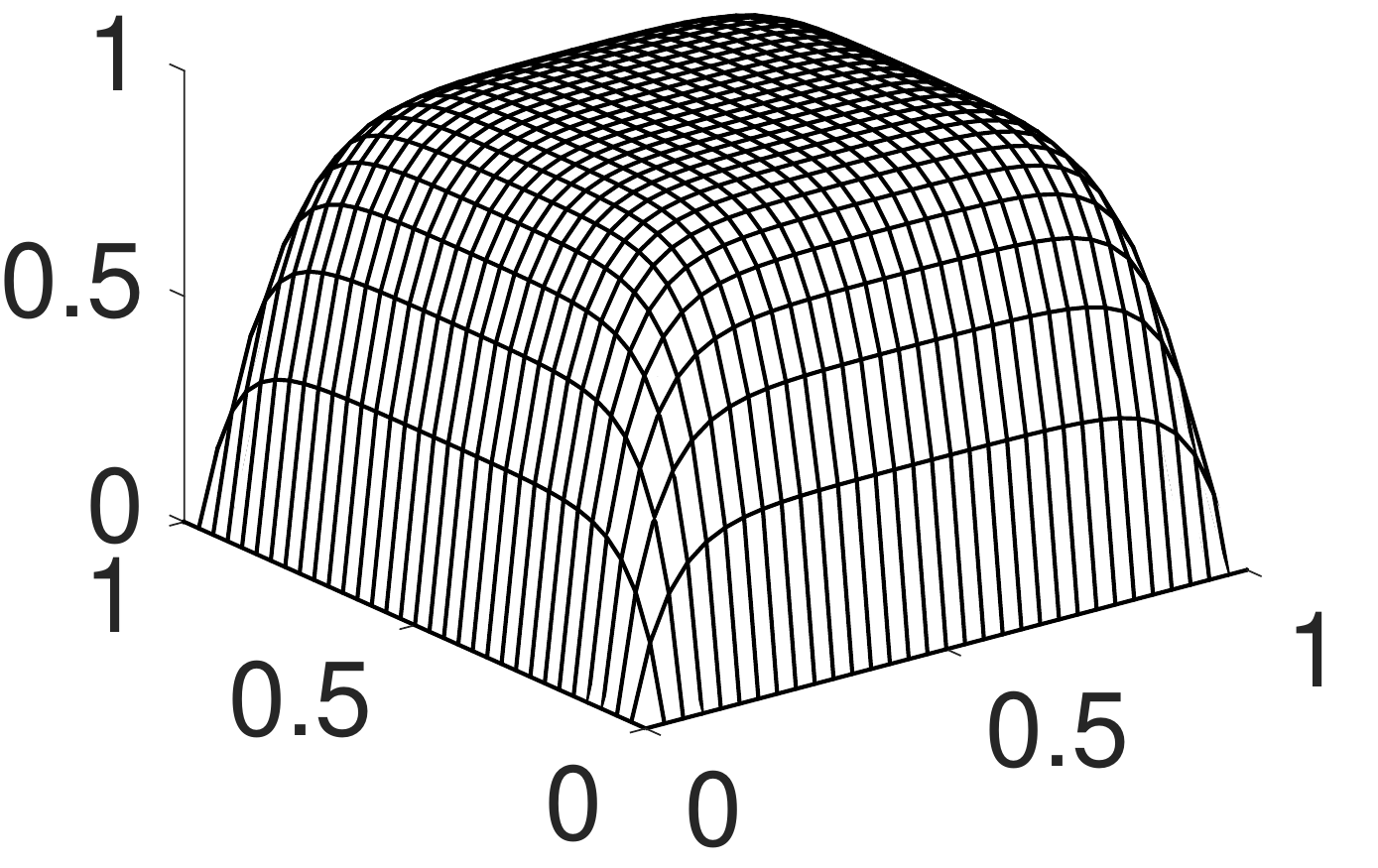}}
    \caption{$\psi_8$ approximation}
  \end{subfigure}
  \begin{subfigure}{5.5cm}
       \hspace*{0.08in}\makebox[0pt][l]{\includegraphics[height=1.2in, width=1.8in]{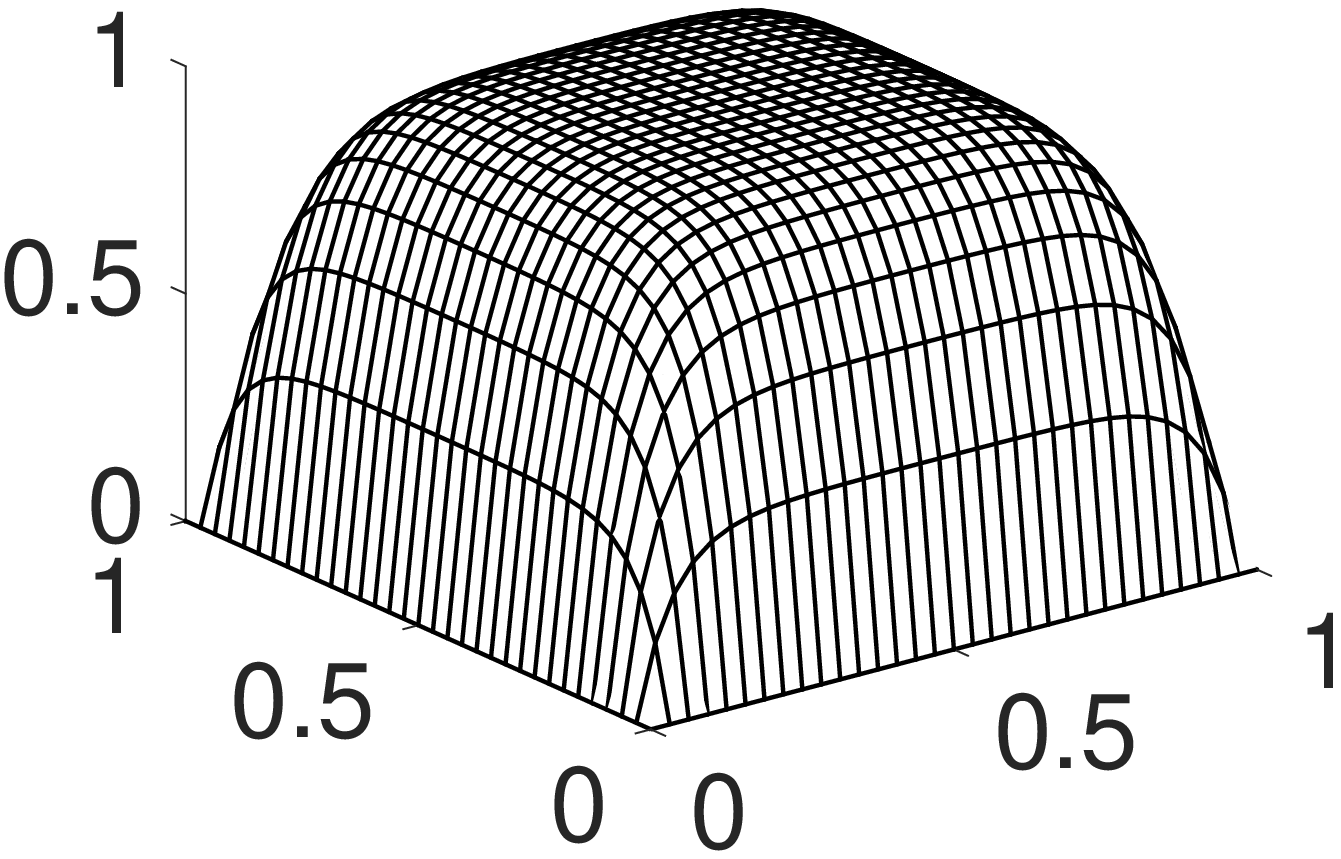}}
    \caption{Analytical solution}
  \end{subfigure}
 \caption {\small Approximation of various modes compared with the analytical solution.}
 \label{fig:fig1}
\end{figure}

\begin{table}[h!]
\setlength\tabcolsep{4.5pt} 
\begin{tabular*}{\textwidth}{@{\extracolsep{\fill}}l*{5}{d{2.9}} }
\toprule
 &  & \multicolumn{2}{c}{Number of elements}  & & \\
\midrule
 & \multicolumn{1}{c}{$~~2^{3}\times2^{3}$} & \multicolumn{1}{c}{$~~2^{4}\times2^{4}~$} & \multicolumn{1}{c}{$~~2^{5}\times2^{5} ~$} & \multicolumn{1}{c}{$~~2^{6}\times2^{6} ~$} & \multicolumn{1}{c}{$~~2^{7}\times2^{7}$} \\
\midrule
$M = 1$ & 3.80095(-01) & 9.56161(-02) & 2.99741(-02) & 1.41233(-02) & 1.02027(-02) \\[1ex]          
$M = 3$ & 3.88708(-01) & 9.61027(-02) & 2.98165(-02) & 1.38540(-02) & 9.90701(-03) \\[1ex]
$M = 5$ & 3.89504(-01) & 9.61117(-02) & 2.98152(-02) & 1.38520(-02) & 9.90491(-03)\\[1ex]
$M = 7$ & 3.89612(-01) & 9.61119(-02) & 2.98152(-02) & 1.38520(-02) & 9.90491(-03)\\
\midrule
Picard & 3.89633(-01) & 9.61119(-02) & 2.98152(-02) & 1.38520(-02) & 9.90491(-03) \\
\bottomrule
\end{tabular*}
\caption{\small{Relative error of series approximation for various $M$ and number of elements.}} 
\label{tab:tab1}
\end{table}

\newpage
\subsection{Test 2}: Here, we set $a(x_1,x_2) = \cosh(x_1+x_2),~r(\psi) = \sin \psi$ and $f$ is constructed such that \eqref{eq:bvp} admits a closed form solution 
$$
\psi(x_1,x_2) = 10(x_1-x_1^2)(x_2-x_2^2)\tan^{-1}[100(x_1-x_2)^6].
$$ 
\vspace*{-0.25in}
\begin{figure}[h!]
\centering
  \begin{subfigure}{5.5cm}
   \hspace*{0.02in}\makebox[0pt][l]{\includegraphics[height=1.2in, width=1.85in]{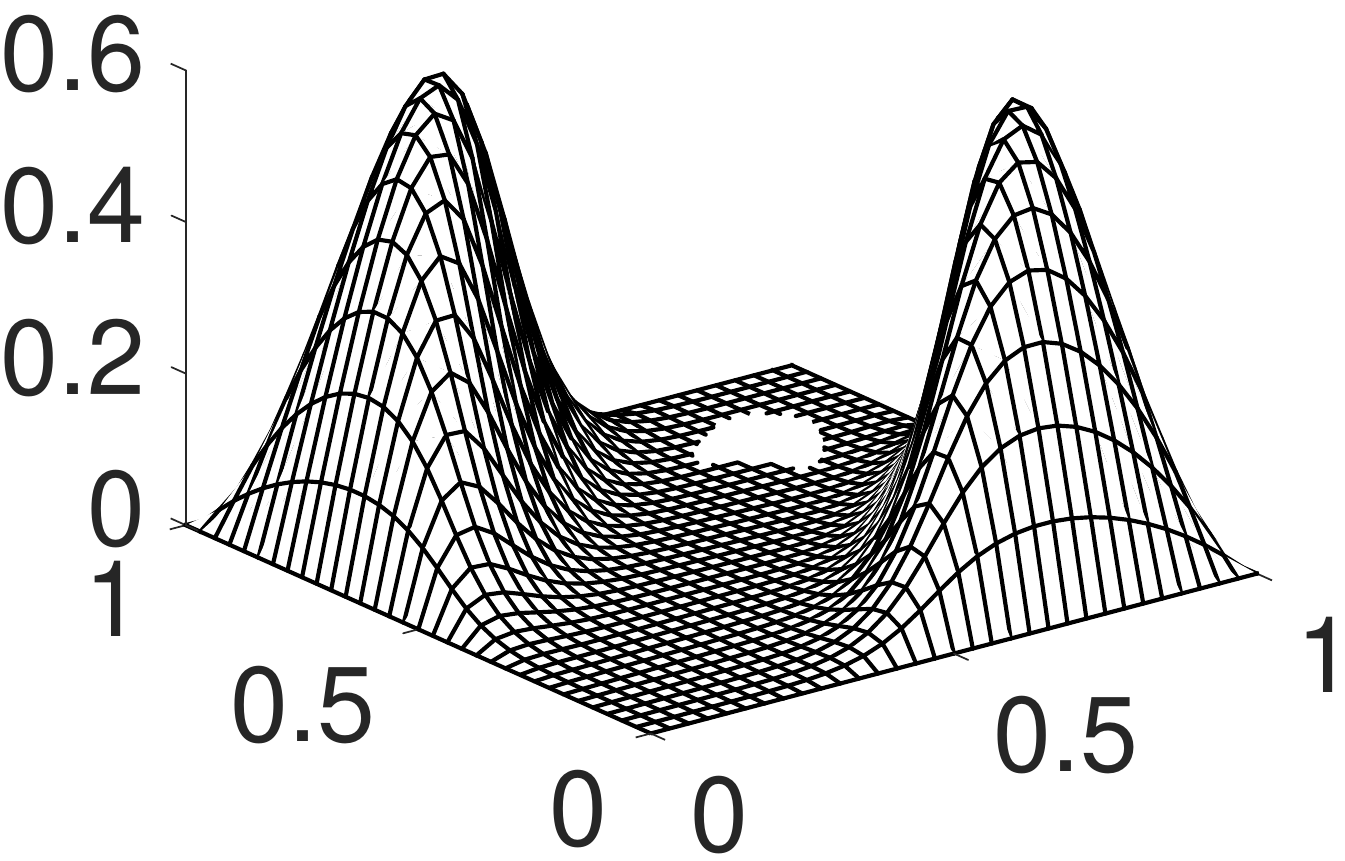}}
    \caption{$\psi_{10}$ approximation}
  \end{subfigure}
  \begin{subfigure}{5.2cm}
    \hspace*{0.06in}\makebox[0pt][l]{\includegraphics[height=1.2in, width=1.85in]{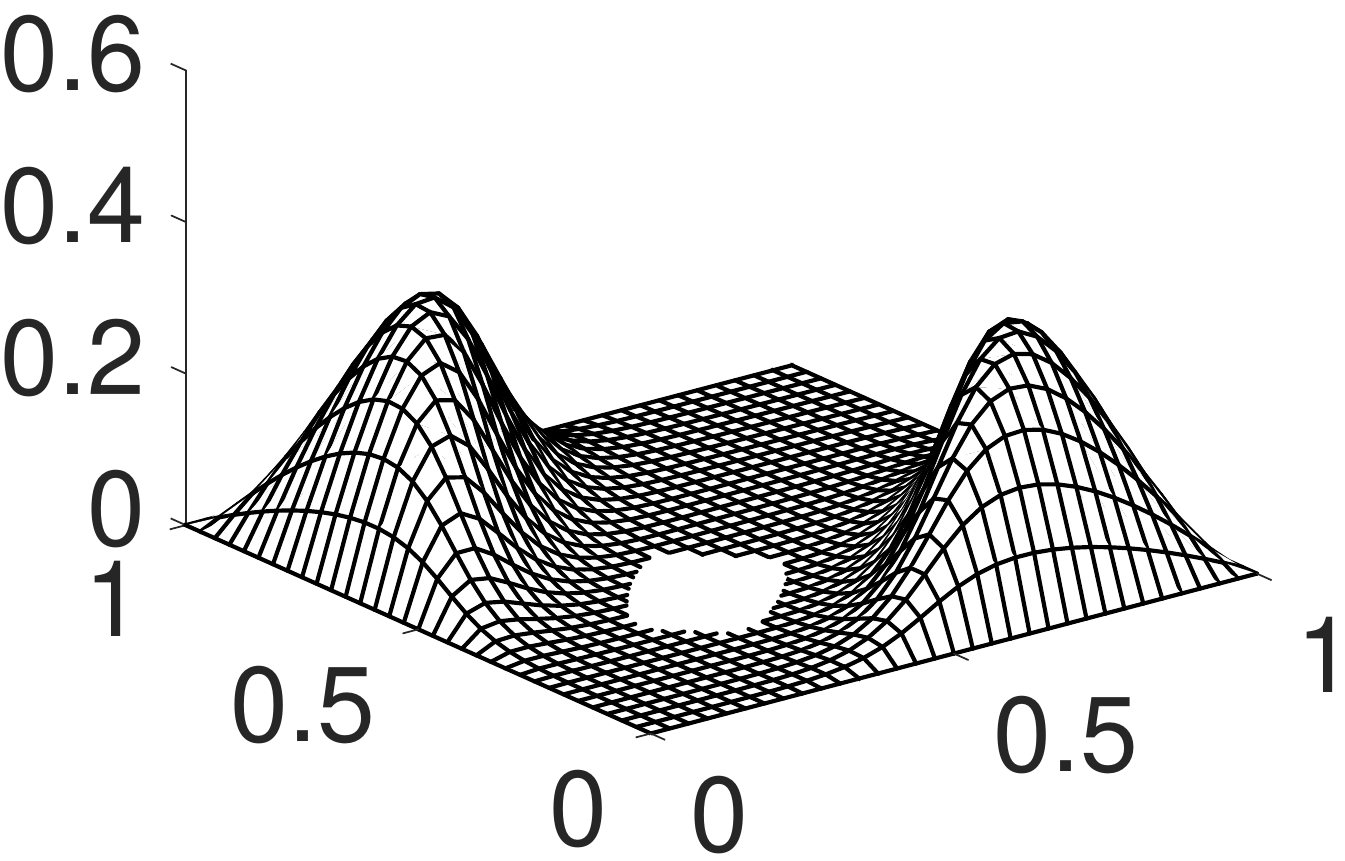}}
    \caption{$\psi_{11}$ approximation}
  \end{subfigure}\\[2ex]
\vspace*{-0.1in}
  \begin{subfigure}{5.6cm}
    \makebox[0pt][l]{\includegraphics[height=1.2in, width=1.85in]{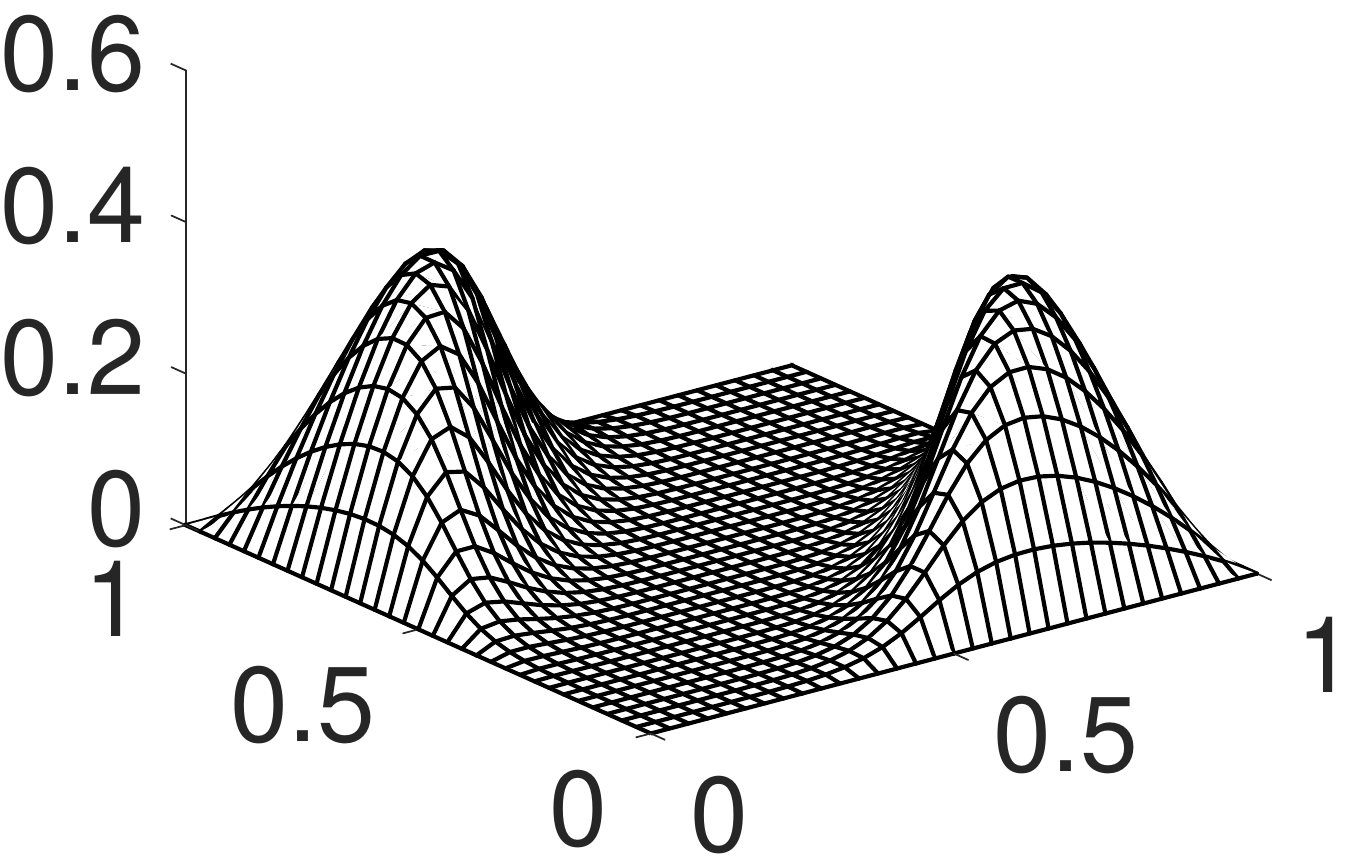}}
    \caption{$\psi_{23}$ approximation}
  \end{subfigure}
  \begin{subfigure}{5.5cm}
       \hspace*{0.08in}\makebox[0pt][l]{\includegraphics[height=1.2in, width=1.85in]{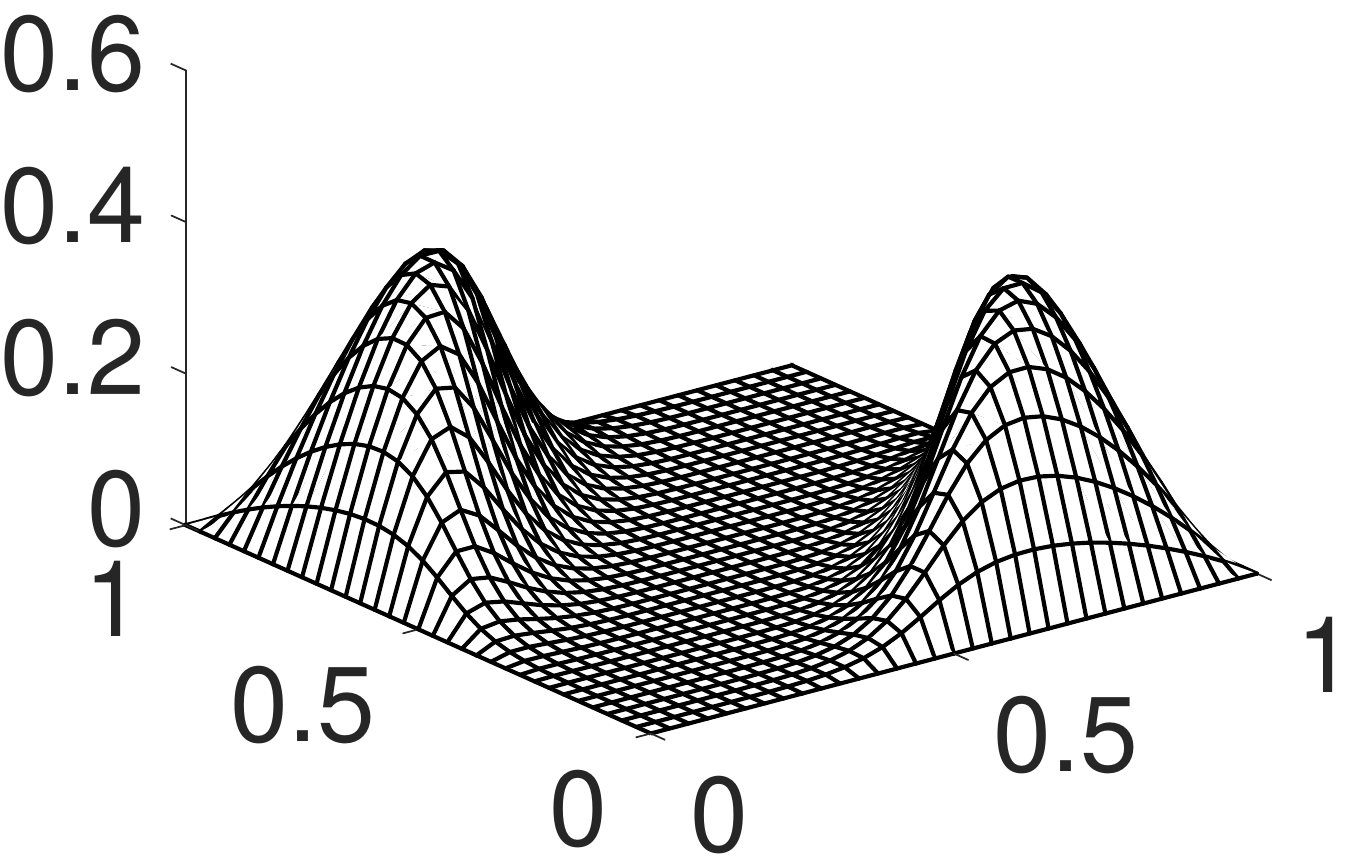}}
    \caption{Analytical solution}
  \end{subfigure}
 \caption {\small Approximation of various modes compared with the analytical solution.}
 \label{fig:fig2}
\end{figure}
\vspace*{-0.2in}
\begin{table}[h!]
\setlength\tabcolsep{5pt} 
\begin{tabular*}{\textwidth}{@{\extracolsep{\fill}} l*{5}{d{2.9}} }
\toprule
$N=2$ &  & \multicolumn{2}{c}{Number of elements}  & & \\
\midrule
 & \multicolumn{1}{c}{$~~~~2^{3}\times2^{3}$} & \multicolumn{1}{c}{$~~~~2^{4}\times2^{4}~$} & \multicolumn{1}{c}{$~~~~2^{5}\times2^{5} ~$} & \multicolumn{1}{c}{$~~~~2^{6}\times2^{6} ~$} & \multicolumn{1}{c}{$~~~~2^{7}\times2^{7}~$} \\
\midrule
$M = 1$ & 1.77952(-01) & 1.75391(-01) & 1.72629(-01) & 1.72591(-01) & 1.72622(-01) \\[1ex]          
$M = 3$ & 8.85117(-02) & 3.72049(-02) & 1.08392(-02) & 5.66948(-03) & 5.07559(-03) \\[1ex]
$M = 5$ & 8.90338(-02) & 3.66827(-02) & 9.31148(-03) & 2.35028(-03) & 6.04344(-04)\\[1ex]
$M = 7$ & 8.90409(-02) & 3.66799(-02) & 9.30586(-03) & 2.34311(-03) & 5.93845(-04)\\
\midrule
Picard & 8.90290(-02) & 3.66693(-02) & 9.29572(-03) & 2.33310(-03) & 5.83828(-04) \\
\midrule
$N=4$ &  & &   & \\
\midrule
$M = 1$ & 1.77952(-01) & 1.75392(-01) & 1.72632(-01) & 1.72594(-01) & 1.72624(-01) \\[1ex]          
$M = 3$ & 8.84999(-02) & 3.71949(-02) & 1.08316(-02) & 5.66720(-03) & 5.07644(-03) \\[1ex]
$M = 5$ & 8.90218(-02) & 3.66722(-02) & 9.30139(-03) & 2.34038(-03) & 5.94682(-04)\\[1ex]
$M = 7$ & 8.90289(-02) & 3.66693(-02) & 9.29571(-03) & 2.33314(-03) & 5.83835(-04)\\
\midrule
Picard & 8.90290(-02) & 3.66693(-02) & 9.29572(-03) & 2.33310(-03) & 5.83828(-04) \\
\bottomrule
\end{tabular*}
\caption{\small{Relative error of series approximation for various $M,N$ and number of elements.}} 
\label{tab:tab2}
\end{table}

\newpage
\subsection{Test 3}:  We set $a(x_1,x_2) = 1+x_1^2 + x_2^2,~r(\psi) = \exp(-\psi^{2}) + \tan^{-1}\psi$ and seek the closed form solution $\psi(x_1,x_2) = \sin(3\pi x_1) \sin(2\pi x_2)$. Moreover, $f$ is chosen accordingly with $r(\psi) = \psi$.
\begin{figure}[h!]
\centering
  \begin{subfigure}{5.5cm}
   \hspace*{0.02in}\makebox[0pt][l]{\includegraphics[height=1.2in, width=1.85in]{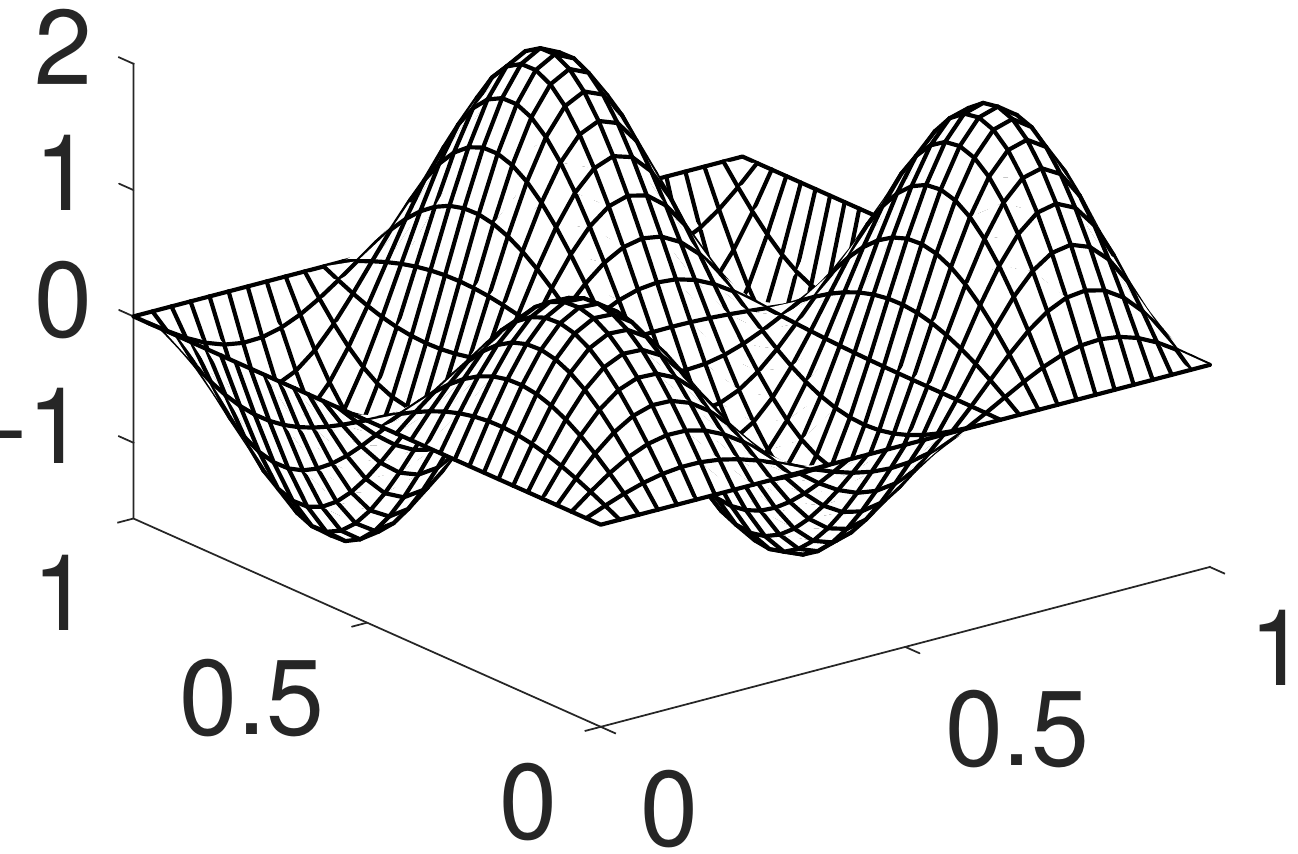}}
    \caption{$\psi_{00}$ approximation}
  \end{subfigure}
  \begin{subfigure}{5.2cm}
    \hspace*{0.06in}\makebox[0pt][l]{\includegraphics[height=1.2in, width=1.85in]{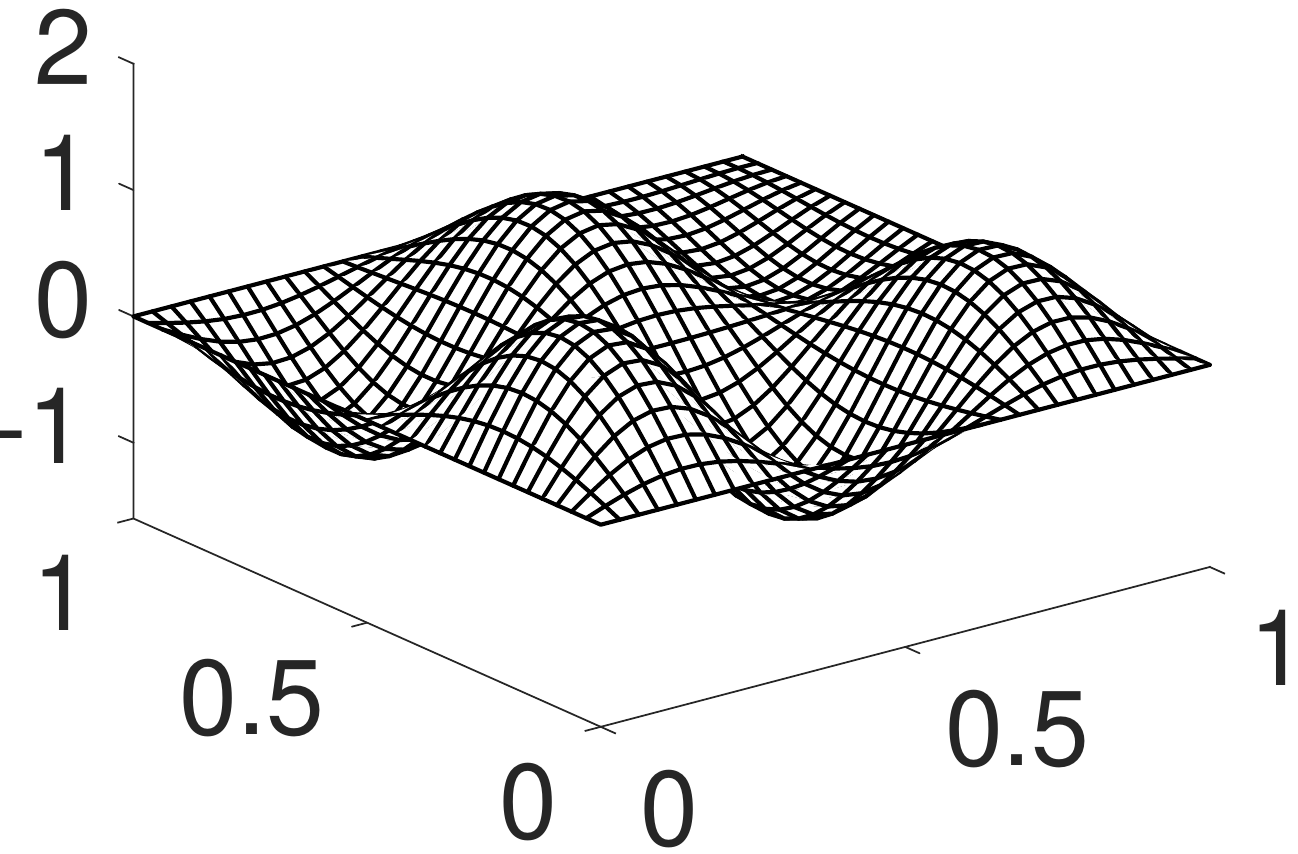}}
    \caption{$\psi_{11}$ approximation}
  \end{subfigure}\\[2ex]
\vspace*{-0.1in}
  \begin{subfigure}{5.6cm}
    \makebox[0pt][l]{\includegraphics[height=1.2in, width=1.85in]{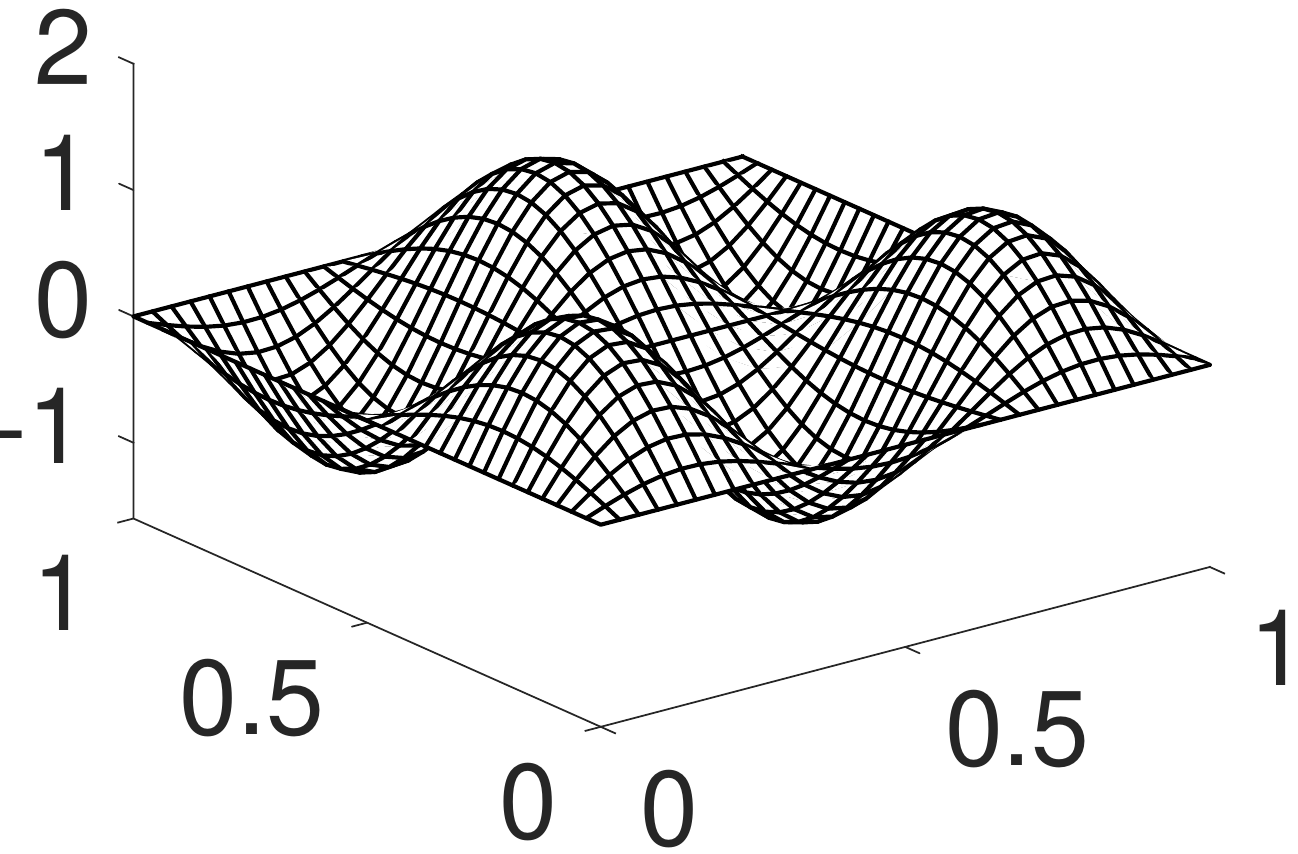}}
    \caption{$\psi_{24}$ approximation}
  \end{subfigure}
  \begin{subfigure}{5.5cm}
       \hspace*{0.08in}\makebox[0pt][l]{\includegraphics[height=1.2in, width=1.85in]{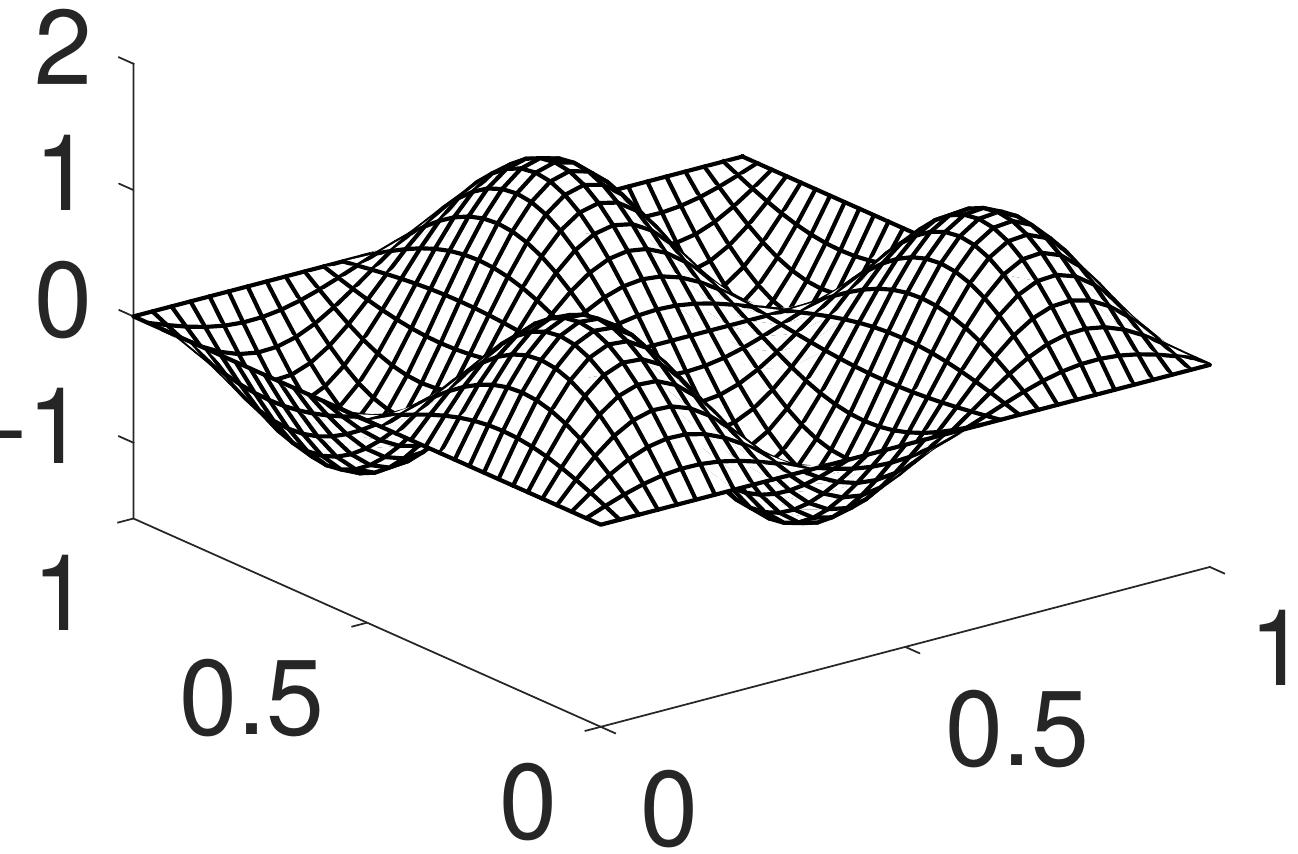}}
    \caption{Analytical solution}
  \end{subfigure}
 \caption {\small Approximation of various modes compared with the analytical solution.}
 \label{fig:fig2}
\end{figure}
\begin{table}[h!]
\setlength\tabcolsep{5pt} 
\begin{tabular*}{\textwidth}{@{\extracolsep{\fill}} l*{5}{d{2.9}} }
\toprule
$N=2$ &  & \multicolumn{2}{c}{Number of elements} &  & \\
\midrule
 & \multicolumn{1}{c}{$~~~~2^{3}\times2^{3}$} & \multicolumn{1}{c}{$~~~~2^{4}\times2^{4}~$} & \multicolumn{1}{c}{$~~~~2^{5}\times2^{5} ~$} & \multicolumn{1}{c}{$~~~~2^{6}\times2^{6} ~$} & \multicolumn{1}{c}{$~~~~2^{7}\times2^{7}~$} \\
\midrule
$M = 1$ & 3.67061(-01) & 3.25203(-01) & 3.18984(-01) & 3.17578(-01) & 3.17235(-01) \\[1ex]          
$M = 3$ & 1.02834(-02) & 3.62913(-02) & 2.20172(-02) & 1.93606(-02) & 1.88047(-02) \\[1ex]
$M = 5$ & 9.31325(-02) & 2.41869(-02) & 6.25023(-03) & 1.74825(-03) & 6.98383(-04)\\[1ex]
$M = 7$ & 9.29634(-02) & 2.40161(-02) & 6.06422(-03) & 1.52422(-03) & 3.86019(-04)\\
\midrule
Picard & 9.29576(-02) & 2.40101(-02) & 6.05810(-03) & 1.51804(-03) & 3.79729(-04) \\
\midrule
$N=4$ &  & &   & \\
\midrule
$M = 1$ & 3.67060(-01) & 3.25202(-01) & 3.18984(-01) & 3.17578(-01) & 3.17235(-01)\\[1ex]          
$M = 3$ & 1.02830(-02) & 3.62881(-02) & 2.20151(-02) & 1.93592(-02) & 1.88035(-02)\\[1ex]
$M = 5$ & 9.31286(-02) & 2.41828(-02) & 6.24614(-03) & 1.74434(-03) & 6.95296(-04)\\[1ex]
$M = 7$ & 9.29585(-02) & 2.40103(-02) & 6.06010(-03) & 1.52007(-03) & 3.80026(-04)\\
\midrule
Picard & 9.29576(-02) & 2.40101(-02) & 6.05810(-03) & 1.51804(-03) & 3.79729(-04) \\
\bottomrule
\end{tabular*}
\caption{\small{Relative error of series approximation for various $M,N$ and number of elements.}} 
\label{tab:tab2}
\end{table}

\newpage

\section{Discussions and conclusion}
The numerical results show an impressive agreement between the series approximation and the exact solutions. Since the lower order modes do not inherit sufficient spatial variability from $a$, their approximations are distinguishable from the analytical  solutions. Once the variability is well resolved, it takes only a few terms to obtain an accurate approximation. Moreover, the contribution of larger values of $M$ and $N$ to the overall error is insignificant as the finite element error dominates. The errors recorded show that using an excessive number of terms to construct the partial solution is obviously an overkill because the gain is insignificant compared to the computational cost. Overall, the method provides accurate approximations and the variational context provides a flexible framework to study its convergence. For the benchmark problem, the method allows the inversion of nonlinear differential operators to be achieved through a series of inversions of the discrete Laplacian. This is appealing because the nontrivial task of calculating the associated Jacobian matrix is avoided entirely. For linear reactions, the method is shown to coincide with the Picard iteration method. Even though the equivalence has not been established for a general nonlinear reaction, the numerical results of the two approaches are largely consistent.

\bibliographystyle{siam}
\bibliography{biblio} 
\end{document}